\newtheorem{theorem}{Theorem}[section]
\newtheorem{corollary}[theorem]{Corollary}
\newtheorem{lemma}[theorem]{Lemma}
\theoremstyle{definition}
\theoremstyle{remark}
\newtheorem{remark}[theorem]{Remark}
\newcommand{\R}{{\mathbb R}}
\newcommand{\N}{{\mathbb N}}
\newcommand{\C}{{\mathbb C}}
\newcommand{\Z}{{\mathbb Z}}
\newcommand{\BC}{{\mathbb B}{\mathbb C}}
\renewcommand{\bar}[1]{{\overline{#1}}}
\newcommand{\norm}[1]{\| #1 \|}
\newcommand{\Arg}{{\operatorname{Arg}}}
\renewcommand{\Re}{{\operatorname{Re}}}
\renewcommand{\Im}{{\operatorname{Im}}}
\numberwithin{equation}{section}
\newcommand{\nc}{\newcommand}
\nc{\G}{{\Gamma}}
\nc{\BG}{{\mathbb G}}
\nc{\V}{{\mathbb V}}
\nc{\E}{{\mathbb E}}
\nc{\F}{{\mathbb F}}
\nc{\BR}{{\mathbb R}}
\nc{\BZ}{{\mathbb Z}}
\nc{\BP}{{\mathbb P}}
\nc{\BA}{{\mathbb A}}
\nc{\BM}{{\mathbb M}}
\nc{\BN}{{\mathbb N}}
\nc{\BT}{{\mathbb T}}
\nc{\fH}{{\mathfrak{H}}}
\nc{\vp}{{\varepsilon}}\nc{\dpar}{{\partial}}\nc{\al}{{\alpha}}
\nc{\PSL}{{\mbox{PSL}_2(\BR)}}
\nc{\PS}{{\mbox{PSL}_2(\BZ)}}
\nc{\SL}{{\mbox{SL}_2(\BZ)}}
\nc{\SLL}{{\mbox{SL}_2(\BZ[i])}}
 \nc{\CL}{{\mbox{PSL}_2(\BC/n\BC)}}
\newtheorem{thm}{Theorem}[section]
\theoremstyle{remark}
\newtheorem{rem}{Remark}[section]
\newcommand{\floor}[1]{{\left\lfloor #1 \right\rfloor}}
\DeclareMathOperator{\lcm}{lcm}
\begin{document}
\title{ Arithmetic and Analysis of the series $\displaystyle {  \sum_{n=1}^{\infty} \frac{1}{n}  \sin  \frac{x}{n} }$  }
\author{Ahmed Sebbar}
\address{Chapman University\\
  One University Drive,\\
  Orange CA 92866, USA}%
\email{sebbar@chapman.edu}
\address{Universit\'e de Bordeaux\\ IMB, UMR 5251, F-33405 Talence, France}
 \email{ahmed.sebbar@math.u-bordeaux.fr}
\author{Roger Gay}

\address{Universit\'e de Bordeaux\\ IMB, UMR 5251, F-33405 Talence, France} 
\email{roger.gay@math.cnrs.fr }

\keywords{Hardy-Littlewood function, Franel integral, Beurling's theorem, Arithmetic functions }
\subjclass[2010]{11M32, 11M38, 11K70, 11K65}

\maketitle

\vspace*{0.6cm}

\hspace{6cm}
{{\it To the memory of our friend Carlos Berenstein}}

\vspace*{0.6cm}

\begin{abstract}
  In this paper we connect  a celebrated theorem  of Nyman and Beurling on the equivalence between the Riemann hypothesis  and the density of some 
   functional space in $ L^2(0, 1)$ to a trigonometric series considered first by Hardy and Littlewood (see \eqref{main}). We highlight some of its curious analytical and arithmetical properties.
    \end{abstract}

\section{introduction}
The main purpose of this work is to bring to light a new relationship between two facets of Riemann's zeta function: On the one hand a functional analysis approach to the Riemann hypothesis due to Nymann and Beurling, and on the other hand a trigonometric series first studied by Hardy and Littlewood \cite{Hardy}, and then followed by Flett \cite{Flett}, Segal \cite{Segal} and Delange \cite{Delange}. The trigonometric series in question is  
\begin{equation}\label{Fundamental}
 \mathfrak f (x)= \sum_{n=1}^{\infty}  \frac{1}{n} \sin \frac{x}{n}.
\end{equation}
It differs from the finite sum  $\displaystyle \sum_{n\leq x} \frac{1}{n} \sin \frac{x}{n} $, as $x$ tends to $\infty$, by
\[ \sum_{n>1}  \frac{1}{n} \sin \frac{x}{n}= O\left(\sum_{n>x} \frac{x}{n^2} \right)= O(1).\]
Hardy and Littlewood proved \cite{Hardy} that, as $x$ tends to $\infty$,
\[ \mathfrak f (x)= O\left(   (\log x)^{\frac{3}{4}}         (\log \log x)^           {\frac{3}{4}+\epsilon}                \right)    \]
and that
\[\mathfrak f (x)=  \Omega\left( (\log \log x)^           {\frac{3}{4}}    \right) \]
from the fact that for $x\geq 5 $, the number of $n\leq x$ whose prime divisors are 
equivalent to $1$ modulo $4$ is $\displaystyle C \frac{x}{(\log x)^{\frac{1}{2}}}$, where $C$ is a constant. Delange \cite{Delange} showed that $\mathfrak f (x)$ is not bounded on the real line only from the following result on the reciprocals of primes in arithmetic progressions
\[\displaystyle \sum_{\substack{ p\,{\rm prime}, \\ p\equiv 1({\rm mod}\,4) }}\frac{1}{p}= \infty\]
and obtained the $\Omega$-result of Hardy and Littlewood  just because 
\[\displaystyle \sum_{\substack{p\,{\rm prime}\leq x, \\p\equiv 1({\rm mod}\,4)} }\frac{1}{p}= \frac{1}{2} \log \log x+ c+ o(1).\]

This trigonometric series, despite its simplicity, has many similarities with the Riemann zeta function \cite{Flett} and deep relation to the divisor 
functions through the sawtooth function
\begin{equation}\label{Beurling1}
\{t \}= -\frac{1}{\pi} \sum_{m=1}^{\infty} \frac{\sin 2m\pi t}{m}= \left\{ \begin{matrix}
t-\floor{t}-\frac{1}{2}& {\rm if}\quad  t\neq \floor{t}\\
\\
0& {\rm if}\quad  t= \floor{t}.
\end{matrix}
\right.
\end{equation}
For $s\in \C$ we define 
\[\sigma^s(n)= \sum_{d\vert n} d^s,\quad \sigma_s(n)= \sum_{d\vert n} d^{-s}\]
so that $\displaystyle n^s \sigma_s(n)=  \sigma^s(n)$. For example if we  define
\[S_1(x)= \sum_{n\leq x}   \sigma_1(n), \quad    S^1(x)= \sum_{n\leq x}   \sigma^1(n)  \]
and 
\[\rho(x)= \sum_{n\leq x} \frac{1}{n} \{ \frac{x}{n}\}= \sum_{n\leq x} \frac{1}{n}\left(  \frac{x}{n}-\floor{\frac{x}{n}}- \frac{1}{2}\right)\]
then the divisors and the fractional parts functions are related by
\begin{align*}
S_1(x)&= \sum_{n\leq x} \frac{1}{n} \floor{\frac{x}{n}}= x\sum_{n\leq x} \frac{1}{n^2}-\rho(x)\\
{}&= \frac{\pi^2}{2}x-\frac{1}{2}\log x- \rho(x)+ O(1).
\end{align*}
Similarly \cite{Walfisz} (p.70):
\[S^1(x)= \frac{\pi^2}{12}x^2- x\rho(x)+ O(x).\]
We will see  (\eqref{Delange} with $f(2\pi x)= \sin x  $) an integral representation of the partial sums of  $ \mathfrak f (x)$, using  the sawtooth function.
\section{Nyman-Beurling criterion for the Riemann hypothesis}

\subsubsection{Nyman-Beurling theorem} For $x>0$, let $\rho(x)$ be the fractional  part of $x$ so that $\rho(x)=x-\floor{x}$. 
To each $0 < \theta \leq 1 $ we associate the function $\displaystyle \rho_{\theta}(x) = \rho(\frac{\theta}{x})$. Then $0\leq   \rho_{\theta}(x) \leq 1 $ and $\displaystyle \rho_{\theta}(x)=  \frac{\theta}{x} $
  if $\theta <x  $. We introduce, as in  \cite{Beurling}, \cite{Donoghue}, \cite{Duarte}, \cite{Balazard1},  \cite{Balazard2},  \cite{Nikolski1}, \cite{Vasyunin} and the more recent book 
  \cite{Nikolski2}
  \[{\mathcal M}= \left \{f, f(x)= \sum_{n=1}^N a_n  \rho(\frac{\theta_n}{x}), \, a_n\in \R, \theta_n \in (0,1],\,  \sum_{n=1}^N a_n \theta_n= 0,\, N\geq 1 \right\}. \]
Each function in ${\mathcal M}$ has at most a countable set of points of discontinuity, and is identically zero for $x>0$.
\begin{thm}[Nyman-Beurling]
Let $1< p\leq \infty $. The subspace $ \mathcal M$ is dense in the Banach space $L^p(0, 1)  $ if and only if the Riemann zeta function $\zeta(s)$ has no zero in the right half plane $\displaystyle \Re s> \frac{1}{p}.       $
\end{thm}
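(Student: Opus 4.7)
The strategy is to combine Hahn--Banach duality with a Mellin-transform computation that brings $\zeta(s)$ into the picture. Let $1/p+1/q=1$. By Hahn--Banach, $\mathcal{M}$ is dense in $L^p(0,1)$ if and only if the only $g\in L^q(0,1)$ with $\int_0^1 f(x)g(x)\,dx=0$ for every $f\in\mathcal{M}$ is $g=0$ (for $p=\infty$ one works with weak-$*$ density, with predual $L^1$). The central auxiliary calculation is the Mellin transform of $\rho_\theta$ on $(0,1)$: splitting at $x=\theta$ and invoking the classical identity $\int_1^\infty\{u\}u^{-s-1}\,du=\frac{1}{s-1}-\frac{\zeta(s)}{s}$, a direct computation yields, for $\Re s>0$,
\[
\int_0^1 \rho_\theta(x)\, x^{s-1}\, dx \;=\; \frac{\theta}{s-1}\;-\;\frac{\theta^{s}\zeta(s)}{s}.
\]
Thus for $f=\sum a_n\rho_{\theta_n}\in\mathcal{M}$ the side condition $\sum a_n\theta_n=0$ kills the $\frac{1}{s-1}$-pole, and the Mellin transform of $f$ equals $-\frac{\zeta(s)}{s}\sum a_n\theta_n^{\,s}$. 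This identity is the coupling between $\mathcal{M}$ and $\zeta$.

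The necessity direction (density $\Rightarrow$ non-vanishing) is then immediate: if $\zeta(s_0)=0$ with $\Re s_0>1/p$, then $g_0(x):=x^{s_0-1}$ lies in $L^q(0,1)$ (the integrability condition reads precisely $\Re s_0>1/p$) and the displayed identity becomes $\int_0^1\rho_\theta(x)g_0(x)\,dx=\frac{\theta}{s_0-1}$, a linear function of $\theta$, so $g_0$ annihilates every element of $\mathcal{M}$. Using $\zeta(\overline{s_0})=0$ as well, real and imaginary parts of $g_0$ supply a real non-trivial annihilator, contradicting density.

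For the sufficiency direction, which is the main obstacle, let $g\in L^q(0,1)$ annihilate $\mathcal{M}$ and set $R(\theta)=\int_0^1 g(x)\rho_\theta(x)\,dx$. The constraint $\sum a_n\theta_n=0$ forces $R(\theta)=c\,\theta$ on $(0,1]$ for some constant $c$. The substitution $x=1/y$ rewrites $R(\theta)=\int_1^\infty g(1/y)\{\theta y\}\,dy/y^2$, and on the strip $-1<\Re s<0$ a Fubini computation produces
\[
\widehat R(s)\;=\;\frac{\zeta(-s)}{s}\,\mathcal{G}(s+1),
\]
where $\mathcal{G}(s)=\int_0^1 g(x)x^{s-1}\,dx$ is analytic in $\Re s>1/p$. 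Since $R(\theta)=c\theta$ on $(0,1]$ pins down $\int_0^1 R\,\theta^{s-1}d\theta=c/(s+1)$ explicitly, the whole of $\widehat R$ becomes tightly constrained; and the hypothesis $\zeta\neq 0$ in $\Re s>1/p$ (equivalently, by the functional equation, $\zeta(-s)\neq 0$ on the relevant strip) renders the $\zeta$-factor invertible, forcing $\mathcal{G}\equiv 0$ in the corresponding half-plane and hence $g\equiv 0$ by Mellin uniqueness. The genuinely subtle step is this last passage from the factored identity to $g\equiv 0$: it is cleanest in the Hilbert case $p=2$, where the change of variables $x=e^{-t}$ turns $L^2(0,1)$ into a weighted $L^2$ on the half-line, makes $\mathcal{M}$ a shift-invariant subspace, and reduces the problem to Beurling's inner/outer factorisation in the Hardy space $H^2$ of $\Re s>1/2$; for general $p$ the analogous argument proceeds via $H^p$-theory.
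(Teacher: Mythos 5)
The paper does not actually prove this theorem: it records only the key Mellin identity \eqref{Beurling11} and its consequence for $f\in\mathcal M$, and refers to Beurling, Balazard--Saias, Nikolski et al.\ for the proof, so there is no in-paper argument to measure you against and your proposal must stand on its own. Its first half does. The Hahn--Banach reduction and the identity $\int_0^1\rho_\theta(x)x^{s-1}\,dx=\frac{\theta}{s-1}-\frac{\theta^s\zeta(s)}{s}$ (which the paper states for $\Re s>1$ but which extends to $\Re s>0$ by analytic continuation, the poles at $s=1$ cancelling) are exactly the paper's \eqref{Beurling11}, and your necessity argument --- a zero $s_0$ with $\Re s_0>1/p$ produces the annihilator $x^{s_0-1}\in L^q(0,1)$ --- is complete and correct.

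The gap is in the sufficiency direction, and it is not a technicality: it is the entire content of the hard half of the theorem. From $R(\theta)=c\theta$ on $(0,1]$ (that step is fine) and the factorisation $\widehat R(s)=-\frac{\zeta(-s)}{s}\,\mathcal G(s+1)$ you conclude that non-vanishing of $\zeta$ ``renders the $\zeta$-factor invertible, forcing $\mathcal G\equiv0$.'' This is a non sequitur. Knowing $R$ on $(0,1]$ determines only $\int_0^1R(\theta)\theta^{s-1}\,d\theta$; the tail $\int_1^\infty R(\theta)\theta^{s-1}\,d\theta$ is an unknown function analytic in a left half-plane, so $\widehat R$ is \emph{not} pinned down, and dividing an undetermined function by a nonvanishing $\zeta(-s)$ yields no information about $\mathcal G$. (If pointwise invertibility of $\zeta$ off its zero set sufficed, the same argument would prove density unconditionally.) The actual role of the hypothesis is the one you defer to in your final sentence: one must view $\mathcal G$ in the Hardy space of the half-plane $\Re s>1/p$, recognise $\mathcal M$ (after $x=e^{-t}$) as generating a shift/dilation-invariant subspace, and use inner--outer factorisation --- or, in Beurling's original treatment, a normal-families argument --- to show that absence of zeros of $\zeta$ forces the annihilator to vanish. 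As written, your proof establishes one implication and asserts the other. A secondary point: for $p=\infty$ the duality you invoke (predual $L^1$, weak-$*$ density) does not match the norm-density statement quoted in the theorem, since a norm-density argument via Hahn--Banach produces an annihilator in $(L^\infty)^*$, not in $L^1$; that endpoint needs a separate word.
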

The fundamental relations in the proof of this theorem are
\begin{equation}\label{Beurling11}
\int_0^1  \rho(\frac{\theta}{x}) x^{s-1}\,dx= -\frac{\theta}{1-s}-\theta^s \frac{\zeta(s)}{s},\quad \Re s>1,
\end{equation}
which is just a variant of the classical representation
\begin{equation}\label{classical0}
\zeta(s)= \frac{s}{s-1}-s\int_0^{\infty} \frac{u-\floor{u}}{(u+1)^{s+1}}\,du. 
\end{equation}
It follows  from \eqref{Beurling11} that  for $f(x)\in {\mathcal M} $
\[\int_0^1 f(x) x^{s-1}\,dx= -\frac{\zeta(s)}{s}  \sum_{k=1}^N a_k \theta_k^s.\]

The study of the function $\mathfrak{f}(x) $ is  intimately linked to that of following function 
\begin{equation*}
\{t \}= \left\{ \begin{matrix}
t-\floor{t}-\frac{1}{2}& {\rm if}\quad  t\neq \floor{t}\\
\\
0& {\rm if}\quad  t= \floor{t}.
\end{matrix}
\right.
\end{equation*}
We have the formal Fourier series expansion \cite{Davenport1},  \cite{Davenport2}
\begin{equation}\label{Beurling2}
\sum_{n=1}^{\infty} \frac{a_n}{n}\{n\theta\}= -\frac{1}{\pi} \sum_{n=1}^{\infty} \frac{A_n}{n}\{\sin 2\pi n\theta\}
\end{equation}
where
\[A_n= \sum_{d\mid n} a_d. \]
Davenport considered the cases of
\[ a_n= \mu(n);\;  a_n= \lambda(n);\; a_n= \Lambda(n);\; a_{n^2}= \mu(n), a_n= 0,\, n\neq m^2.\] 
These arithmetical functions have their usual number-theoretic meanings. For example if $\omega(n)$ is the number of distinct prime factors
 of $n$ or, in other terms, $\displaystyle \omega(n)= \sum_{b\mid n} 1$ and $\omega(1)= 0$, then the M\"obius function $\mu(n)$ is defined by
 \[
 \mu(n)=\left \{ \begin{matrix}
0 & {\rm if}\, n\,{\rm is \,divisible\, by \,a \,perfect\, square } >1\\
\\
(-1)^{\omega(n)}& {\rm otherwise}
\end{matrix}
 \right.
 \]
and the Von Mangoldt function $\Lambda(n) $ is defined by
 \[
 \Lambda(n)=\left \{ \begin{matrix}
\log p& {\rm if}\, n= p^{\alpha}\,{\rm for \;a\; prime}\; p\;{\rm  \;and\; some }\,\alpha\in \N\\
\\
0 & {\rm otherwise}
\end{matrix}
 \right. 
 \]
In the case of the M\"obius function $a_n= \mu (n)$, Davenport uses
Vinogradov's method, a refinement of Weyl's method on estimating trigonometric sums, to prove that for any fixed $h$
\begin{equation}\label{Davenport}
\sum_{n\leq y} \mu(n) e^{2i\pi n x}= O\left(y(\log y)^{-h} \right)
\end{equation}
uniformly in $x\in \R/\Z$. The implied constants are not effective. 
There have been several results justifying \eqref{Beurling2}  for other particular sequences $(a_n)$. The most general problem is considered in \cite{Hartman}.

It should be noted that the Davenport or Hardy Littlewood estimates admit a common analysis. For the convenience of the reader we gather together a few classical results  on exponential sums. Let $\bf I$ be an interval of length at most $N\geq 1$ and let $f: {\bf I} \to \R $ be a smooth function satisfying the estimates $x\in {\bf I}, 2\leq N\ll T,  j\geq 1$
\[    \vert f^{(j)} (x)\vert= {\rm exp}\left(O(j^2) \right) \frac{T}{N^j}          \]
then with $f(x)= e^x$
\begin{enumerate}

\item Van der Corput estimate: For any natural number $k\geq 2$, we have
\begin{equation}\label{Corput}
\displaystyle \frac{1}{N} \sum_{n\in {\bf I}} e(f(n))= O\left( \frac{T}{N^k}^{\frac{1}{2^k-2}} \log^{\frac{1}{2}}(2+T)\right)
\end{equation}
\item Vinogradov estimate: For some absolute constant $c>0$.
\begin{equation}\label{Vinogradov}
\displaystyle \frac{1}{N} \sum_{n\in {\bf I}} e(f(n)) \ll N^{-\frac{c}{k^2}}.
\end{equation}

\end{enumerate}

\subsubsection{The functions $\floor{x},\;\rho(x) $ and $\{x\}$}
The Hardy-Littlewood-Flett function $ { \mathfrak f (x)}$ is related, in many ways, to the three functions $\floor{x},\;\rho(x) $ and $\{x\}$. The  floor function $  \floor{x}  $ is related to the divisor function $d(n)= 1\star1 (n) $, the multiplicative square convolution product of the constant function $1$, through the Dirichlet hyperbola method. More generally if $g, h$ are two multiplicative functions and $f=g\star h $. The Dirichlet hyperbola method is just the evaluation of a sum in two different ways:
\begin{eqnarray*}
\displaystyle\sum_{n\leq x}f(n)&= \displaystyle\sum_{n\leq x} \,\sum_{ab=n} g(a) h(b)= \sum_{a\leq \sqrt{x}}\; \sum_{b\leq \frac{x}{a}} g(a) h(b)+ \sum_{b\leq \sqrt{x}}\; \sum_{a\leq \frac{x}{a}} g(a) h(b)\\
{}&\displaystyle-\sum_{a\leq \sqrt{x}}\; \sum_{b\leq \sqrt{x}} g(a) h(b).
\end{eqnarray*}

If $g=h$, then
\[\displaystyle\sum_{n\leq x}f(n)= 2\sum_{a\leq \sqrt{x}} \sum_{b\leq \frac{x}{a}} g(a) h(b) - \left(\sum_{a\leq \sqrt{x}}g(a)  \right)^2. \]
As an application we have the estimate \cite{Titch} (p. 262) for the divisor function $d= 1\star 1$:
\[ d(x)= x\log x +(2\gamma-1)x+ O(x^{\frac{1}{2}}).\]
The importance of the functions  $\{x \}$ and $\rho(x)$  lies in the integral representations of the Riemann zeta-function:
\[\zeta(s)= -s\int_0^{\infty}  \frac{\{x\}-\frac{1}{2}}{x^{s+1}}\;dx= -s\int_0^{\infty}  \frac{\rho(x)}{x^{s+1}}\;dx\]
valid for $-1<\Re s< 0 $. Making the change of variable $\displaystyle x= \frac{1}{u} $ and applying Mellin inversion formula gives
\begin{equation*}
\rho(\frac{1}{u})= -\frac{1}{2i\pi} \int_{c-i\infty}^{c+i\infty} \frac{\zeta(s)}{s} u^{-s} \,ds.
\end{equation*}
For later use, we give some details on the case considered by Davenport in \eqref{Beurling2}. From \eqref{Davenport} we obtain for $ -1<c<0$
\[\sum_{n=1}^{\infty} \frac{\mu(n)}{n} \rho(nx)= -\frac{1}{2i\pi}\int_{c-i\infty}^{c+i\infty}\frac{\zeta(s)}{s\zeta(1-s)} x^s\,ds.        \]
By the functional equation of the Riemann $\zeta$-function and the functional equation of the $\Gamma$-function we obtain for $ 0<a<1$
\[ \sum_{n=1}^{\infty} \frac{\mu(n)}{n} \rho(nx)=    -\frac{1}{2i\pi^2}\int_{a-i\infty}^{a+i\infty}\Gamma(s)\sin(\frac{1}{2}\pi s)  (2\pi x)^{-s}\,ds=   
-\frac{1}{\pi}\sin(2\pi x).  \]
Using the classical equivalent formulation of the Prime Number Theorem that $\displaystyle \sum_{n=1}^{\infty}   \frac{\mu(n)}{n}= 0   $ we obtain Davenport's relation
\begin{equation}\label{Beurling3}
 \sum_{n=1}^{\infty} \frac{\mu(n)}{n} \{nx\}= -\frac{1}{\pi}\sin(2\pi x)
\end{equation}
where the convergence is uniform by Davenport estimate \eqref{Davenport}.
We will need two important properties of the function $\{x \}$:\\
Kubert identity:
\begin{equation}\label{Franel1}
\sum_{l\,{\rm mod}\,m}\left\{x+\frac{l}{m}\right\}= \left\{m x\right \}
\end{equation}
Franel formula:
\begin{equation}\label{Franel2}
\int_0^1  \left\{a x\right \} \left\{b x\right \}\,dx= \frac{\lcm( a,b)}{12 ab}.
\end{equation}
Kubert identity and  Franel's formula are interesting features shared by many functions. Let $B_r(x)$ be the Bernoulli polynomial defined by
\[\frac{te^{tx}}{e^t-1}=  \sum_{r=0}^{\infty} B_r(x)t^r,\quad \vert t\vert <2\pi, \]
so that
\[  B_1(x)= x-\frac{1}{2},\quad   2!B_2(x)= x^2-x+\frac{1}{6}, \cdots \]
If $r\geq 2$ is even then for $ 0\leq x\leq 1        $
\[ (2\pi)^rB_r(x)= (-1)^{\frac{1}{2}- 1} \sum_{l=1} ^{\infty}    \frac{2 \cos (2l\pi x)}{l^r}       \]
with absolute convergence of the series. The Hurwitz zeta function $\zeta(s,x)  $ is defined for $\Re s>1 $ by
\[\zeta(s,x)= \sum_{n=0}^{\infty} \frac{1}{(x+n)^s}.  \]
Then $B_r(x)$ and $ \zeta(s,x)$ both satisfy the functional equation \cite{Mordell}
\[f(x)+f(x+\frac{1}{k}) +\cdots+f(x+\frac{k-1}{k})= f^{(k)} f(kx),\]
where $ f^{(k)}= k^{1-n}$ if $f(x)= B_n(x)$ and $ f^{(k)}= k^{s}$ if $f(x)=  \zeta(s,x)$. Furthermore, if $ a, b$ denote arbitrary positive integers and $ (a, b)= \gcd(a, b), [a, b]= \lcm(a, b) $ the greatest common divisor and least common multiple respectively of $a$ and $b$,  then \cite{Mordell}:
\[\int_0^1 B_r({ax})  B_r({bx}) \,dx= (-1)^{r-1} \frac{B_{2r}}{ (2r)!} \left(\frac{(a,b)}{[a,b]} \right)^r     \]
and for $\Re s> \frac{1}{2} $
\[  \int_0^1 \zeta(1-s, {ax})   \zeta(1-s, {bx}) \,dx=    \frac{2\Gamma^2(s)\zeta(2s)}{ (2\pi)^{2s}} \left(\frac{(a,b)}{[a,b]} \right)^s.   \]
Similarly to \eqref{classical0} we have
\[\zeta(s,w)= \frac{1}{(s-1)w^{s-1}}+\frac{1}{w^s}-s \int_0^{\infty} \frac{u-\floor{u}}{(u+w)^{s+1}}\,du, \]
and the function $\displaystyle \zeta(s,w)- \frac{1}{(s-1)w^{s-1}}$ is analytic in $\{\Re s>0 \} $.
In the next section we use two summation formulas.

 If $F$ is an antiderivative of $f$, then, formally \cite{Bod1}
\begin{equation}\label{Franel3}
 \int_0^1\rho(\frac{\theta}{t}) f(t)dt= \theta  \int_0^{1} \frac{f(t)}{t}\,dt - \sum_{n=1}^{\infty}n\left( F(\frac{\theta}{n})-F(\frac{\theta}{n+1})\right)
\end{equation}
and 
if $\mu$ is the M\"obius function and if $0< \theta,\,x \leq 1 $ , we have, pointwise \cite{Bod}
\[ \sum_{n=1}^{\infty} \mu(n)\left\{\rho\left(\frac{\theta}{nx}\right)-\frac{1}{n}\rho\left(\frac{\theta}{x}\right)  \right\}= -\chi_{]0, \theta]}(x).\]

\section{From Beurling's theorem to Hardy-Littlewood-Flett function $\mathfrak{f} (x)$}
\subsection{The emergence of Franel integral type}
To show that the constant function $1 \in \bar{ {\mathcal M}}$ one has, as in \cite{Bod},  to minimize the norms in $L^2([0,\,1])     $
\begin{equation}
\norm {1+ \sum_{j=1}^N a_j \rho \left( \frac{\alpha_j}{\cdot} \right) }
\end{equation}
which brings back to the evaluation of integrals of Franel type, computed in \cite{Bod}:
\[J(\beta)= \int_0^1  \rho \left( \frac{1}{x} \right) \rho \left( \frac{\beta}{x} \right)\, dx, \quad \beta\in [0,1]. \]
To show that the  function $\sin x \in \bar{ {\mathcal M}}$ one has, this time,  to minimize the norms 
\begin{equation}
\norm {\sin x+ \sum_{j=1}^N a_j \rho \left( \frac{\alpha_j}{\cdot} \right) }
\end{equation}

Using \eqref{Beurling3} the  minimization problem reduces to evaluation of the scalar products in $\displaystyle L^2\left(0,\,1 \right) $ giving the Fourier sine series of the function $  \{ \frac{\theta}{x}\}        $, that is 
\[
a_n= (\{\frac{\theta}{x}\}{\big\vert}\sqrt{2}\sin(n\pi x))=\sqrt{2}\int_0^1\{\frac{\theta}{x}\}\sin(n \pi x) dx= -\pi\sqrt{2}\sum_{j \geq 1}\frac{\mu_j}{j}\int_0^1\{\frac{\theta}{x} \}\{\frac{jn x}{2}\} dx
\]
and then to the evaluation of $\displaystyle \int_0^1 \{\frac{a}{x}\} \{bx\}dx$, another kind of  integrals of Franel type. We compute these integrals in the case $a=m,\,b=n $, $m$ and  $n$  being integers.
\subsection{The second kind of Franel type integrals $I_{n,m}=\int_0^1\{nx\}\{\frac{m}{x}\}dx,\;n,m\in \N^*$}
The values of the integrals $I_{n,m}$ are given by the following
\begin{thm}\label{extendedFranel}
For positive integers $m,n$, the modified Franel integrals are given by
 \begin{equation*}\begin{split}
 I_{n,m}&= \displaystyle \frac{n}{m}+m\log m+m(n-1)\log(mn)-m(\log((n-1)!))\\
&- \displaystyle \frac{n(n-1)}{2}-\frac{nm^2}{2}(\zeta(2)-\sum_{1\leq j\leq m}(1-\frac{m}{j}))+\sum_{1\leq k\leq n-1,mn\geq jk}(1-\frac{mn}{jk}).
\end{split}\end{equation*}
\end{thm}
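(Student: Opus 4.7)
My approach is direct expansion and piecewise evaluation. Writing $\{t\}=t-\floor{t}$ for the fractional part,
\[
\{nx\}\{m/x\}=nm-nx\floor{m/x}-(m/x)\floor{nx}+\floor{nx}\floor{m/x},
\]
so $I_{n,m}$ splits into four integrals over $(0,1]$. The first equals $nm$. For the second, $\floor{m/x}=j$ on the interval $(m/(j+1),m/j]$, $j\geq m$, and the telescoping rearrangement
\[
\sum_{j\geq m}j\!\left(\tfrac{1}{j^2}-\tfrac{1}{(j+1)^2}\right)=\tfrac{1}{m}+\zeta(2)-\sum_{j=1}^{m}\tfrac{1}{j^2}
\]
yields
\[
n\!\int_0^1\!x\floor{m/x}\,dx=\tfrac{nm}{2}+\tfrac{nm^2}{2}\!\left(\zeta(2)-\sum_{j=1}^{m}\tfrac{1}{j^2}\right),
\]
which is the source of the $\zeta(2)$-contribution in the statement. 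For the third, $\floor{nx}=k$ on $[k/n,(k+1)/n)$ for $1\le k\le n-1$, and reindexing $\sum k\log(k+1)=\sum_{j=2}^{n}(j-1)\log j$ gives
\[
m\!\int_0^1\!\tfrac{\floor{nx}}{x}\,dx=m\!\sum_{k=1}^{n-1}\!k\log\!\tfrac{k+1}{k}=m(n-1)\log n-m\log\bigl((n-1)!\bigr),
\]
producing the $m\log((n-1)!)$ term.

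For the fourth and most delicate integral I would use the lattice-point identity
\[
\floor{nx}\floor{m/x}=\#\{(a,b)\in\N\times\N:\ 1\le a\le nx,\ 1\le b\le m/x\}.
\]
Since $a\le nx\Leftrightarrow x\ge a/n$ and $b\le m/x\Leftrightarrow x\le m/b$, Fubini converts it into the finite sum
\[
\int_0^1\!\floor{nx}\floor{m/x}\,dx=\!\!\sum_{\substack{a\ge 1,\,b\ge 1\\ a\le n,\ ab\le mn}}\!\!\bigl(\min(m/b,1)-a/n\bigr),
\]
which splits into the rectangular regime $b\le m$ (contributing $m(n-1)/2$) and the hyperbolic regime $b\ge m+1,\,ab\le mn$, giving the Dirichlet-type sum $\sum_{a=1}^{n-1}\sum_{b=m+1}^{\floor{mn/a}}(m/b-a/n)$. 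Expanding $\sum_{b=m+1}^{\floor{mn/a}}m/b=m\bigl(H_{\floor{mn/a}}-H_m\bigr)$ and combining with the $-m(n-1)\log n+m\log((n-1)!)$ piece from the third integral produces the announced $m\log m$ and $m(n-1)\log(mn)$ factors, while the residual rational part repackages as the finite sum $\sum_{1\le k\le n-1,\,mn\ge jk}(1-mn/(jk))$ under the relabelling $(a,b)\leftrightarrow(k,j)$.

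The main obstacle is the algebraic consolidation in this last step: the logarithmic expansions of the harmonic numbers $H_{\floor{mn/a}}$ must cancel exactly against the $\log n$ and $\log((n-1)!)$ terms from the third integral, and the linear-in-$a/n$ subsums have to pair correctly with the rational constants $nm,\ nm/2,\ m(n-1)/2$ coming from the earlier pieces so as to yield $n/m-n(n-1)/2$. Once this bookkeeping is performed, the four contributions assemble into the stated closed form.
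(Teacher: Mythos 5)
Your route is genuinely different from the paper's. The paper writes $I_{n,m}$ as the multiplicative convolution $(f\star g)(m)$ of $f(x)=x\chi_{[0,1]}(x)\{nx\}$ and $g(x)=\{x\}\chi_{[1,\infty)}(x)$, computes the two Mellin transforms (in particular $G(s)=\frac{1}{s+1}-\frac{\zeta(-s)}{s}$), and recovers $I_{n,m}$ by Mellin inversion on a vertical line $1<c<2$ after the substitution $s\mapsto -s$, expanding $\zeta(s)$ into its Dirichlet series and identifying each resulting contour integral as the inverse Mellin transform of an explicit elementary function. Your real-variable expansion into four integrals, plus the lattice-point identity for $\floor{nx}\floor{m/x}$ and Fubini, avoids complex analysis entirely, and the pieces you compute are correct: your values of $n\int_0^1 x\floor{m/x}\,dx$ and $m\int_0^1 \floor{nx}x^{-1}\,dx$ check out, and your four contributions reassemble to $I_{2,1}=\tfrac52-\log 2-\zeta(2)$ and $I_{1,2}=\tfrac72-2\zeta(2)$, in agreement with the paper's worked examples. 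The Mellin route is the one that generalizes (to non-integer parameters and other Franel-type kernels); yours is elementary and makes the arithmetic structure, the hyperbola $ab\le mn$, visible. Note that the paper, like you, stops at ``putting together these partial results'' without displaying the final assembly.

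Two concrete problems with your last paragraph, however. First, the consolidation you defer cannot proceed as you describe: $\sum_{b=m+1}^{\floor{mn/a}}m/b=m\bigl(H_{\floor{mn/a}}-H_m\bigr)$ is a rational number, not a logarithm, so nothing in the fourth integral can ``produce the announced $m\log m$ and $m(n-1)\log(mn)$ factors'' or cancel against $\log((n-1)!)$; the entire logarithmic content of $I_{n,m}$ in your decomposition is the $-m(n-1)\log n+m\log((n-1)!)$ contributed (with the sign coming from the expansion) by the third integral. Second, that logarithmic content does not agree with the theorem's printed $+m\log m+m(n-1)\log(mn)-m\log((n-1)!)$ --- and the displayed formula is in fact inconsistent with the paper's own examples (for $n=2$, $m=1$ it evaluates to $\log 2-\zeta(2)$ rather than $\tfrac52-\log 2-\zeta(2)$), so it must contain transcription errors. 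Your pieces do match every listed example ($-2\log 3+\log 2$ for $I_{3,1}$, $-5\log 2+\log 3$ for $I_{4,1}$, etc.), so you should state the closed form your decomposition actually yields rather than trying to force it into the printed one.
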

Let us first give few examples:
 \begin{align*}
\displaystyle I_{(2,1)}= &\displaystyle  \frac{5}{2}-\log(2)-\zeta(2);& \displaystyle &I_{(3,1)}=\displaystyle \frac{25}{6}+\log(2)-2\log(3)-\frac{3}{2}\zeta(2)\\
 \displaystyle I_{(4,1)}= &\displaystyle \frac{35}{6}-5\log(2)+\log(3)-2\zeta(2);& \displaystyle &I_{(5,1)}= \displaystyle  \frac{35}{6}-5\log(2)+\log(3)-2\zeta(2)    \\
\displaystyle I_{(1,2)}=& \displaystyle \frac{7}{2}-2\zeta(2);& \displaystyle &I_{(1,3)}= \displaystyle \frac{61}{8}-\frac{9}{2}\zeta(2)  \\
\displaystyle I_{(1,4)}= & \displaystyle  \frac{5989}{288}-\frac{25}{2}\zeta(2);& \displaystyle& I_{(2,2)}= \displaystyle \frac{49}{6}-2\log(2)-4\zeta(2)\\
 \displaystyle I_{(2,3)}=& \displaystyle   \frac{171}{10}-3\log(2)-9\zeta(2);&  \displaystyle &I_{(2,4)}= \displaystyle \frac{18469}{630}-4\log(2)-16\zeta(2)\\ 
\displaystyle I_{(2,5)}=& \displaystyle    \frac{15059}{336}-5\log(2)-25\zeta(2);&\displaystyle& I_{(3,2)}= \displaystyle  \frac{196}{15}+2\log(2)-4\log(3)-6\zeta(2) 
\end{align*}
We observe that in all these examples the factor
  $\displaystyle  \zeta(2)= \frac{\pi^2}{6}$ is present.

  For the proof we consider the two functions defined on $]0,+\infty[$
  \[f(x)= f_n(x)= x\chi_{\lbrack0,1\rbrack}(x)\{nx\},\quad g(x)=\{x\}\chi_{\lbrack1,+ \infty \rbrack}(x)\] and their multiplicative convolution
  \[
  (f \star g)(a)= \int_0^{+ \infty}f(x)g(\frac{a}{x})\frac{dx}{x}, \quad  (f \star g)(m)= I_{n,m}.
  \]
 We split the computations in several steps.  A natural method is to use  first  the Mellin transform with its property ${\mathcal M}(f \star g) (s) = {\mathcal M}(f)(s){\mathcal M}(g) (s) $, followed by an inversion.   The main idea is the decomposition formula \eqref{Franel3}, valid if 
 $\displaystyle \int_0^1 \frac{\vert f(x)\vert }{x}\, dx$ is finite:
 \[\int_0^1 \rho_{\theta}(x) f(x) dx=  \sum_{n=1}^{\infty} \int_{\frac{\theta}{n+1}}^{\frac{\theta}{n}}(\frac{\theta}{x}-n)f(x) dx  +\int_{\theta}^1 \frac{\theta}{x} f(x) dx, \]
or in an generalized function form,
\[\rho_{\theta}(x)= \sum_{n=1}^{\infty} (\frac{\theta}{x}-n) \chi_{[\frac{\theta}{n+1}, \frac{\theta}{n}]}(x)+ \frac{\theta}{x} \chi_{[\theta, 1]} \]
where $\chi_B$ denotes the characteristic function of the set $B$.
\vspace*{0.2cm}
  \subsubsection{Computations of different integrals}

  \begin{enumerate}
  \item{Computation of $ \displaystyle F(s) = M(f)(s)$}
  For $\sigma = \Re\,s >-2$ we have
  \begin{equation*}\begin{split}
  F(s)&=\int_0^1\{nx\}x^sdx \\
  &=\sum_{0\leq k\leq n-1}\int_{\frac{k}{n}}^{\frac{k+1}{n}}(nx-k)x^sdx\\
  &=n\int_0^1x^{s+1}dx-\sum_{1\leq k\leq n-1}k\int_{\frac{k}{n}}^{\frac{k+1}{n}}x^sdx\\
  &=\frac{n}{s+2}-\frac{1}{(s+1)n^{s+1}} \sum_{1\leq k\leq n-1}k((k+1)^{s+1}-k^{s+1})\\
  &=\frac{n}{s+2}-\frac{1}{(s+1)n^{s+1}}\{n^{s+2}-(1+2^{s+1}+3^{s+1}+\cdots n^{s+1})\}\\
 \end{split}\end{equation*}
 \item{Computation of $ \displaystyle G(s)=M(g)(s)$}
 For $-2<\sigma= {\Re}s<-1$ we have
 \begin{equation*}\begin{split}
 G(s)&=\int_1^{+\infty} {\{x\}}^{s-1}dx\\
 &=\sum_{k\geq1}\int_k^{k+1} (x-k)x^{s-1}dx\\
 &=\int_1^{+\infty}x^sdx-\sum_{k\geq1}k\int_k^{k+1}x^{s-1}dx\\
 &=\int_1^{+\infty}x^sdx-\frac{1}{s}\sum_{k\geq1}k((k+1)^s-k^s)\\
 &=\frac{1}{s+1}-\frac{\zeta(-s)}{s}\quad \sigma<-1\\
 \end{split}\end{equation*}
 Hence for  $-2 < c<-1$ we can write
 \begin{equation*}\begin{split}
 I_{n,m}=&\\
  \frac{1}{2i\pi}\int_{c-i\infty}^{c+i\infty}&\left(\frac{1}{s+1}-\frac{\zeta(-s)}{s}\right)\left(\frac{n}{s+2}-\frac{1}{(s+1)n^{s+1}}(n^{s+1}-(1+2^{s+1}+3^{s+1}+\cdots(n-1)^{s+1})\right)\frac{ds}{m^s}.
 \end{split}\end{equation*}
  \noindent and, by changing $s$ to $-s$, we get for $1<c<2$
 \begin{equation*}\begin{split} 
 I_{n,m}=&\\
  \frac{1}{2i\pi}\int_{c-i\infty}^{c+i\infty}&\left(\frac{1}{1-s}+\frac{\zeta(s)}{s}\right)\left(\frac{n}{2-s}-\frac{1}{(1-s)n^{1-s}}(n^{1-s}-(1+2^{1-s}+3^{1-s}+\cdots(n-1)^{1-s})\right)\frac{ds}{m^{-s}}.
 \end{split}\end{equation*}
 By expanding we find:
 \begin{equation*}\begin{split}
 I_{n,m}&=\frac{1}{2i\pi}\int_{c-i\infty}^{c+i\infty}\frac{n.m^s}{(1-s)(2-s)}ds\\
 &-\frac{1}{2i\pi}\int_{c-i\infty}^{c+\infty}\frac{m^s}{(1-s)^2n^{1-s}}(n^{1-s}-(1+2^{1-s}+3^{1-s}+\cdots(n-1)^{1-s}))ds\\
 &+\frac{1}{2i\pi}\int_{c-i\infty}^{c+i\infty}\frac{\zeta(s)}{s}(\frac{n}{2-s}-\frac{1}{(1-s)n^{1-s}}(n^{1-s}-(1+2^{1-s}+\cdots+(n-1)^{1-s}))m^sds\\
 \end{split}\end{equation*}
 We  treat the last integral by expanding the $ \zeta $ function in Dirichlet series. We will treat each type of integrals appearing separately. Then we proceed to the necessary groupings in order to conclude. \\
 In the following we  write $\displaystyle \int_{(c)} $ \ instead of $\displaystyle \int_ {c-i \infty} ^ {c+ i \infty} $,  with $ 1 <c <2 $. 
 \item{Computation of   $ \displaystyle \frac{n}{2i\pi}\int_{(c)} \frac{m^s}{(1-s)(2-s)}ds$}. We set $\displaystyle  f(x)=-\frac{1}{x}$  for $0<x\leq1$ and $f(x)= \displaystyle -\frac{1}{x^2}$ for $x>1$. Its Mellin transform is  $\displaystyle  \frac{1}{(1-s)(2-s)}$ for $1<\sigma<2$. We obtain $\displaystyle  \frac{n}{m}$ for $m\geq1$.
  \item{ Computation of  $ \displaystyle -\frac{1}{2i\pi}\int_{(c)} \frac{m^s}{(1-s)^2}ds$}. We take $f(x)=\displaystyle  \frac{\log x}{x}$ for $0<x<1$ and $0$  for $x\geq1$. Its Mellin transform is $\displaystyle  -\frac{1}{(s-1)^2}$ for $\sigma>1$. Here we obtain $m\log m$ for $m\geq1$.
     \item{Computation of  $ \displaystyle \frac{k}{n}\int_{(c)} \frac{(m\, n)^s ds}{(1-s)^2\,k^s}$}.  As before we find $\displaystyle m\log (\frac{m\,n}{k})$ if $mn\geq k$ and  $0$ if $mn<k$.
  \item{  Computation of $ \displaystyle \frac{n}{2i\pi}\int_{(c)}\frac{m^sds}{s(2-s)j^s},\;j\geq1$}.
  We take $\displaystyle f(x)= -\frac{1}{2} $ for $0<x\leq1$ and $\displaystyle f(x)= -\frac{1}{2x^2}$ for $x>1$. We get $\displaystyle -\frac{n}{2}$ if $j\leq m$ and $\displaystyle -\frac{nm^2}{2j^2} $ if $j>m$.
  
 \item{ Computation of  $ \displaystyle -\frac{1}{2i\pi}\int_{(c)}\frac{m^sds}{s(1-s)j^s}, \;j\geq1$}. Here we  take $\displaystyle f(x)=1-\frac{1}{x}$ if $0<x\leq 1$ and $0$ for $x>1$. 
  We obtain $\displaystyle 1-\frac{m}{j}$ if $m\geq j$ and  $0$ otherwise. 
  \item{ Computation of  $ \displaystyle \frac{k}{n}\frac{1}{2i\pi}\int_{c)}\frac{(nm)^s}{s(1-s)(jk)^s}$}.
 Here we obtain $\displaystyle 1-\frac{nm}{jk}$ if $mn\geq jk$ and  $0$ otherwise.
  \end{enumerate}
  By putting together these partial results we end the proof of Theorem \eqref{extendedFranel}.
 \subsection{Second approach $ \left\{\frac{ \theta}{x}\right\}$}
The most interesting approach for the evaluation of the integral $\displaystyle \int_0^1 \left\{ \frac{ \theta}{t} \right\} \sin (n \pi t) dt $ is to use \eqref{Franel3}:
\[
\int_0^1 \left\{ \frac{ \theta}{t} \right\} \sin (n \pi t) dt = \int_0^\theta \left\{ \frac{ \theta}{t} \right\} \sin (n \pi t) dt + \theta \int_\theta^1 \frac{ 1}{t} \sin (n \pi t) dt.
\]
Moreover
\begin{equation*} \begin{split}
\int_0^\theta \left\{ \frac{ \theta}{t} \right\} \sin (n \pi t) dt &=
 \sum_{p \geq 1} \int_{\frac{\theta}{p+1}}^{ \frac{ \theta}{p}}  \sin (n \pi t) ( \frac{ \theta}{t}-p) dt \\
 & = \theta \sum_{p \geq 1} \int_{\frac{\theta}{p+1}}^{ \frac{ \theta}{p} } \frac{\sin (n \pi t)}{t} dt - \sum_{p \geq 1} p \int_{\frac{\theta}{p+1}}^{ \frac{ \theta}{p}} \sin (n \pi t)dt \\
 &= \theta \int_0^\theta \frac{ \sin (n \pi t)}{t} dt + \frac{1}{n \pi}\sum_{p \geq 1} p \Big( \cos \frac{n\pi \theta}{p} - \cos \frac{ n \pi \theta}{p+1} \Big)  \\
 \end{split} \end{equation*}
 
 Hence  the  $n$-th Fourier coefficient
  \[\displaystyle  a_n= (\{\frac{\theta}{x}\}{\big\vert}\sqrt{2}\sin(n\pi x))= \sqrt{2}\int_0^1\{\frac{\theta}{x}\}\sin(n \pi x) dx\]
is also
 \[
 a_n = \sqrt{2} \left ( \theta \int_0^1 \frac{ \sin (n \pi t)}{t} dt +\frac{1}{n \pi} \sum_{p \geq 1} p \Big( \cos \frac{n\pi \theta}{p} - \cos \frac{ n \pi \theta}{p+1} \Big) \right ) .
 \]
 Seeking for the  coefficient  corresponding to $  f(x) = \sum_{1 \leq \nu \leq N} c_\nu \{ \frac{ \theta_\nu}{x} \}$ the first integral does not matter since  $\sum_{1 \leq \nu \leq N} c_\nu \theta_\nu =0$.
 It remains to compute
 \[
 A =  \sum_{p \geq 1} p \Big( \cos \frac{n\pi \theta}{p} - \cos \frac{ n \pi \theta}{p+1} \Big) .
 \]
 $A$ depends on $n$ and $\theta$. We first consider the finite sum
 \[
  A_N =\sum_{1 \leq p \leq N} p \Big( \cos \frac{n\pi \theta}{p} - \cos \frac{ n \pi \theta}{p+1} \Big) \]
  and set  $x= n\pi \theta$. We have by a partial summation
  \begin{eqnarray*}
 A_N& =& \cos \frac{x}{1} + \cdots + \cos \frac{x}{N} - N \cos \frac{x}{N+1} \\
 &= &(\cos \frac{x}{1}-1) + \cdots + (\cos \frac{x}{N}-1) +N(1- \cos \frac{x}{N+1})\\
 &=& -2 \sum_{k=1}^N \sin^2 \frac{x}{k} +2N \sin^2 \frac{x}{N+1} .
 \end{eqnarray*}
 Hence
 \begin{equation}
 \lim_{N\to + \infty} A_N=  -2 \sum_{k=1}^{\infty} \sin^2 \frac{x}{k}.
 \end{equation}
 We thus obtain one of our main results: the  $n$-th Fourier coefficient $a_n$ of the fundamental function $\{\frac{\theta}{\bullet}  \} $ is  related to the value at $n$ of the antiderivative of the  function $\displaystyle \mathfrak f (x)$ given in \eqref{Fundamental}
 \begin{equation}\label{main}
  a_n = \sqrt{2} \left ( \theta \int_0^1 \frac{ \sin (n \pi t)}{t} dt    -\frac{1}{n \pi}     \sum_{k=1}^{\infty} \sin^2 \frac{n\pi \theta}{k}             \right),
 \end{equation}
bearing in mind that the derivative of $\displaystyle   \sum_{k=1}^{\infty} \sin^2 \frac{u}{k}$ is  $ {\mathfrak f }(2u)   $.
  \medskip
 
To give some useful integral representations we adapt an interesting method, due to Delange \cite{Delange}, and use a result of Saffari and Vaughan \cite{Saffari}. First we introduce
for $ 0<\alpha\leq 1$
\[c_{\alpha}(u)=\left\{ \begin{matrix}
1 & {\rm if}\quad  u-\floor{u}= \rho(u)< \alpha\\
\\
0& \;\;{\rm otherwise}
\end{matrix}
\right.
\]
Furthermore for $x>0,\,y>1$ let 
\[\vartheta_{x,y}(u)= \frac{1}{\log y}\sum_{n\leq y}\frac{1}{n}c_{\alpha}(\frac{x}{n}). \]
According to \cite{Saffari} we have
\begin{lemma}\label{Saffari} We have the estimate
\[\vartheta_{x,y}(u)= u+ O\left((\log x)^{\frac{2}{3}}(\log y)^{-1}  \right),\]
the $O$ is uniform in $u$.
\end{lemma}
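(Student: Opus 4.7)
The plan is to write the indicator $c_u$ as its mean plus a mean-zero fluctuation, handle the mean contribution by the harmonic-sum asymptotic, and attack the fluctuation through a Fourier approximation combined with the exponential sum bounds \eqref{Corput}--\eqref{Vinogradov} already recalled in the excerpt.

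Write $c_u(v)=u+\psi(v)$, where $\psi$ is $1$-periodic, of bounded variation, and of zero mean. Then
\[
\vartheta_{x,y}(u)=\frac{u}{\log y}\sum_{n\leq y}\frac{1}{n}+\frac{1}{\log y}\sum_{n\leq y}\frac{\psi(x/n)}{n}.
\]
The first term equals $u+O(1/\log y)$ by $\sum_{n\leq y}1/n=\log y+\gamma+O(1/y)$, which is already absorbed in the claimed error. It remains to bound the second term uniformly in $u\in(0,1]$.

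For this, I would approximate $\psi$ by Beurling--Selberg (Vaaler-type) trigonometric majorants of degree $K$, producing an expansion
\[
\psi(v)=\sum_{1\leq |k|\leq K}c_k(u)\,e^{2\pi i k v}+r_K(v),
\]
with $|c_k(u)|\ll 1/|k|$ uniformly in $u$ and with a remainder controlled in $L^{1}$. After interchanging summations, the task reduces to bounding, uniformly in $k$, the exponential sums
\[
S_k=\sum_{n\leq y}\frac{1}{n}\,e^{2\pi i k x/n}.
\]
The phase $f(n)=kx/n$ satisfies $|f^{(j)}(n)|=j!\,|k|x/n^{j+1}$, which fits the hypothesis of \eqref{Corput}--\eqref{Vinogradov} on each dyadic block $[N,2N]\subset[1,y]$ with $T\asymp |k|x/N$. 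A partial summation strips the $1/n$ weight, and \eqref{Vinogradov} yields cancellation of the shape $\exp(-c(\log y)^{1/3})$ on the resulting clean sum, uniformly in $k$ up to a suitable cutoff.

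Assembling the dyadic contributions, summing over $k$ against the $1/|k|$ weights, and optimizing $K$ as a small power of $\log x$ produces the stated $(\log x)^{2/3}(\log y)^{-1}$ error. The main obstacle is the joint optimization of the Vinogradov order (which depends on $\log y$) against the Fourier truncation $K$ (which depends on $\log x$), so as to balance the truncation error $1/K$ against the extracted cancellation in $S_k$, while keeping all estimates uniform in $u$. The exponent $2/3$ emerges only after this balancing and is characteristic of the Korobov--Vinogradov regime; a naive Van der Corput treatment via \eqref{Corput} alone would yield a weaker power of $\log x$.
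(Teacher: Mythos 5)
First, a point of reference: the paper offers no proof of Lemma~\ref{Saffari} at all --- it is quoted from Saffari and Vaughan \cite{Saffari} --- so there is no internal argument to compare yours against. Your overall strategy (split $c_u$ into its mean $u$ plus a mean-zero $1$-periodic fluctuation, dispose of the mean via $\sum_{n\le y}1/n=\log y+O(1)$, and control the fluctuation by a finite trigonometric approximation combined with Vinogradov-type bounds for $S_k=\sum_{n\le y}n^{-1}e^{2\pi ikx/n}$) is indeed the route that produces the exponent $2/3$, and it is in spirit the argument behind the cited result. As written, however, the sketch has three concrete gaps.

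(i) The origin of the exponent $2/3$ is misplaced: it does not come from balancing the Fourier cutoff $K$ against the Vinogradov order. It comes from the initial segment $n\le z:=\exp\bigl(c(\log x)^{2/3}\bigr)$, on which the saving $\exp\bigl(-c(\log N)^{3}/(\log T)^{2}\bigr)$ furnished by \eqref{Vinogradov} (with $T\asymp kx/N$) is $\asymp 1$ and the only available bound is the trivial $\sum_{n\le z}1/n\ll(\log x)^{2/3}$; the complementary range then contributes the same order because $\sum_{j>(\log x)^{2/3}}\exp\bigl(-cj^{3}/(\log x)^{2}\bigr)\ll(\log x)^{2/3}$. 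You must make this split explicit; ``optimizing $K$'' does not produce it. (ii) The hypotheses recalled before \eqref{Corput}--\eqref{Vinogradov} require $N\ll T$, i.e.\ $N\ll\sqrt{kx}$; the dyadic blocks with $N\gg\sqrt{kx}$, which occur as soon as $y\gg\sqrt{x}$, fall outside both estimates and need the first/second derivative test or the van der Corput $B$-process (they end up contributing $O(1)$ per frequency, but this must be argued). (iii) The Vinogradov saving degrades rather than improves as $k$ grows, so at best $|S_k|\ll(\log x)^{2/3}$ uniformly in $k\le K$, and the Erd\H{o}s--Tur\'an-type sum $\sum_{k\le K}|S_k|/k$ then costs a factor $\log K\asymp\log\log y$: your argument as written yields $O\bigl((\log x)^{2/3}(\log\log y)(\log y)^{-1}\bigr)$, not the stated error, and removing that factor is precisely the delicate point of \cite{Saffari}. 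A smaller issue: controlling the remainder $r_K$ in $L^1$ is not enough, since it must be evaluated at the specific points $x/n$; what you actually need is the one-sided majorant/minorant property of the Beurling--Selberg construction together with the positivity of the weights $1/n$, so that only the zeroth Fourier coefficient of the majorant-minus-minorant enters.
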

 If $f$ is  continuously differentiable function on $ [0, 1] $
\[ f(2\pi \frac{x}{n})= -2\pi \int_{\{\frac{x}{n} \}}^1 f'(2\pi u)\, du = -2\pi\int_0^1 c_u(\frac{x}{n}) f'(2\pi u)\, du.\]
Hence
\begin{equation}\label{Delange}
\sum_{n\leq x} \frac{1}{n}f(2\pi \frac{x}{n})= -2\pi(\log x) \int_0^1 \vartheta_{x,x}(u) f'(2\pi u),
\end{equation}
since
\[ \int_0^1 \vartheta_{x,x}(u) f'(2\pi u)\, du=    \int_0^1  f'(2\pi u)\, du  +     \int_0^1\left( \vartheta_{x,x}(u)-u\right) f'(2\pi u)\, du\] \[=  \int_0^1\left( \vartheta_{x,x}(u)-u\right) f'(2\pi u)\, du. \]
From the Lemma \eqref{Saffari} we get, since $f'$ is bounded on $(0, 1) $ 
\[\sum_{n\leq x} \frac{1}{n}f(2\pi \frac{x}{n})= O\left(\log x \right)^{\frac{2}{3}}.\]
A natural example is to consider a Dirichlet character modulo $N,\, \chi$. In this case
\[\sum_{n\leq x} \frac{\chi(n)}{n}\sin (2\pi \frac{x}{n})= O\left(\log x \right)^{\frac{2}{3}}.\]
We shall not try to give sufficient conditions to justify the process here. The main interest of the remark is that it suggests a method of dealing with various other sums than ${\mathfrak f}(x)$.
 
\section{Almost periodicity}
The goal
 of this section is to show, by elementary methods, that the Hardy-Littelwood-Flett function 
$\displaystyle {\mathfrak f}(x) = \displaystyle \sum_{n=1}^{\infty} \frac{1}{n}  \sin  \frac{x}{n} $ is not bounded on the real line. First we recall two fundamental results on 
Bohr-almost periodic functions \cite{Bohr} (p.39, 44, and 58).
\begin{thm}[The Mean value theorem]
For every almost periodic function $f(x)$, there exists a mean value
\[\lim_{T\to +\infty} \frac{1}{T} \int_0^T f(x)dx= M\{f(x)\}\]
and 
\[\lim_{T\to +\infty} \frac{1}{T} \int_a^{a+T} f(x)dx= M\{f(x)\}\]
uniformly with respect to $a$. In particular if $f$ is an odd almost periodic function, then its mean $M\{f(x)\}$ is zero.
\end{thm}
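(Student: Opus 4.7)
The strategy is classical, due to Bohr: reduce the statement to a uniform-in-$a$ Cauchy estimate on the averages $\phi(T,a) := \frac{1}{T}\int_a^{a+T} f(x)\,dx$ by exploiting $\varepsilon$-translation numbers. For $\varepsilon>0$, let $L(\varepsilon)$ be the length furnished by almost periodicity, so that every real interval of length $L(\varepsilon)$ contains some $\tau$ with $\sup_x |f(x+\tau)-f(x)|\leq\varepsilon$. First I would record the standard consequence that $f$ is bounded on $\R$: given $x$, any $1$-translation number $\tau\in[-x,-x+L(1)]$ satisfies $|f(x)|\leq 1+\sup_{[0,L(1)]}|f|$, so write $M:=\|f\|_\infty<\infty$.

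The crucial preliminary estimate I would prove is
\[
|\phi(T,a)-\phi(T,0)|\;\leq\;\varepsilon \;+\; \frac{2\,L(\varepsilon)\,M}{T},\qquad \text{uniformly in } a\in\R.
\]
To get it, pick $\tau\in[-a,-a+L(\varepsilon)]$ that is an $\varepsilon$-translation number: then $\int_a^{a+T}|f(x+\tau)-f(x)|\,dx\leq \varepsilon T$, while the substitution $y=x+\tau$ turns $\int_a^{a+T}f(x+\tau)\,dx$ into $\int_{a+\tau}^{a+\tau+T}f(y)\,dy$, which differs from $\int_0^T f$ by at most $2M\,L(\varepsilon)$ in absolute value because $a+\tau\in[0,L(\varepsilon)]$. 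Dividing by $T$ yields the bound, and uniformity in $a$ is exactly the payoff of almost periodicity.

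Next, existence of $\lim_{T\to\infty}\phi(T,0)$ follows by a two-parameter Cauchy argument. For positive integers $S,T$, partition $[0,ST]$ into $T$ blocks of length $S$ or into $S$ blocks of length $T$ to get the two representations
\[
\phi(ST,0)\;=\;\frac{1}{T}\sum_{k=0}^{T-1}\phi(S,kS)\;=\;\frac{1}{S}\sum_{k=0}^{S-1}\phi(T,kT).
\]
Applying the preliminary estimate to every summand gives $\phi(ST,0)=\phi(S,0)+O(\varepsilon+L(\varepsilon)/S)$ and, symmetrically, $\phi(ST,0)=\phi(T,0)+O(\varepsilon+L(\varepsilon)/T)$; subtracting shows $|\phi(S,0)-\phi(T,0)|\leq 2\varepsilon+O(L(\varepsilon)/\min(S,T))$. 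Letting $S,T\to\infty$ through integers and then $\varepsilon\to 0$ yields the Cauchy property on $\N$; the extension to real $T$ is the trivial interpolation $\phi(T,0)-\phi(\lfloor T\rfloor,0)=O(M/T)$. Denote the limit by $M\{f(x)\}$. Combining with the preliminary estimate upgrades the convergence to be uniform in $a$, proving both displayed identities in the statement.

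For the odd case, oddness of $f$ forces $\int_{-T}^{T} f(x)\,dx = 0$, i.e., $\phi(2T,-T)=0$ for every $T>0$; the uniform convergence in $a$ just established then forces $M\{f(x)\} = \lim_{T\to\infty}\phi(2T,-T)=0$. The main obstacle is engineering the uniformity in $a$ of the preliminary estimate: without it, the two-parameter Cauchy argument cannot close, and the uniform convergence claim is out of reach. Almost periodicity is precisely what supplies a single $\tau$ that simultaneously translates the integration window back into the bounded reference region $[0,L(\varepsilon)]$ and perturbs the integrand uniformly by at most $\varepsilon$, so this is the one place where the hypothesis is really used.
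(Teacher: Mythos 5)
The paper does not actually prove this statement: it is recalled verbatim from Bohr's monograph (the citation \cite{Bohr}, pp.\ 39 and 44), so there is no in-paper argument to compare yours against. Judged on its own, your proof is the standard Bohr argument and it is correct: the boundedness remark, the uniform comparison $|\phi(T,a)-\phi(T,0)|\leq \varepsilon + 2L(\varepsilon)M/T$ obtained by translating the window $[a,a+T]$ back into $[0,L(\varepsilon)]$ by an $\varepsilon$-translation number $\tau\in[-a,-a+L(\varepsilon)]$, the two-way block decomposition of $\phi(ST,0)$ giving the Cauchy property, the $O(M/T)$ interpolation to real $T$, and the deduction $M\{f\}=\lim_{T\to\infty}\phi(2T,-T)=0$ for odd $f$ (which genuinely needs the uniformity in $a$, as you note) are all sound. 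The only cosmetic point is that the implied constants in your $O(\varepsilon+L(\varepsilon)/S)$ should carry the factor $M=\norm{f}_\infty$, as your displayed preliminary estimate correctly does.
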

\begin{thm}[The antiderivative theorem]\label{anti}
The integral $ \displaystyle  F(x)= \int_0^x f(t) dt$ of an almost-periodic function $f(x)$ is  almost-periodic if and only if  it is bounded.
\end{thm}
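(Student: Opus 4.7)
The ``only if'' direction is the elementary fact that every Bohr almost-periodic function is bounded: a relatively dense set of $\epsilon$-almost-periods, together with the uniform continuity that is automatic for almost-periodic functions, confines the values of $F$ to lie within $\epsilon$ of the values taken on a fixed compact interval.

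For the ``if'' direction, assume $f$ is almost-periodic and $F(x)=\int_0^x f(t)\,dt$ is bounded. The first move is a normalization: by the Mean Value Theorem $F(T)/T\to M\{f\}$ as $T\to\infty$, so boundedness of $F$ forces $M\{f\}=0$. The strategy is then to exhibit $F$ as a uniform limit of almost-periodic functions, invoking the classical fact that such a uniform limit is again almost-periodic. Applying the Bochner--Fej\'er approximation theorem to $f$ yields trigonometric polynomials $\sigma_N\to f$ uniformly on $\R$, each with vanishing constant term (because $M\{f\}=0$). Its formal antiderivative
\[
\Sigma_N(x)=\sum_{k}\frac{c_k^{(N)}}{i\lambda_k^{(N)}}\,e^{i\lambda_k^{(N)}x}
\]
is again a trigonometric polynomial, since every frequency $\lambda_k^{(N)}$ is nonzero, and hence is itself almost-periodic.

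The decisive step, and what I expect to be the principal obstacle, is to show that $\Sigma_N\to F$ uniformly on $\R$. The remainder $R_N:=F-\Sigma_N$ has derivative $f-\sigma_N$ of arbitrarily small sup-norm and is bounded (as the difference of two bounded functions), but a bounded function with small derivative need not itself be uniformly small, so no fundamental-theorem-of-calculus argument will close the gap. It is exactly here that the boundedness hypothesis on $F$ is used in an essential way: one exploits the almost-periodic, mean-zero structure of $f-\sigma_N$ together with the explicit coefficient control furnished by the Bochner--Fej\'er kernel to estimate $\|R_N\|_\infty$ in terms of $\|f-\sigma_N\|_\infty$. An alternative route is to verify Bochner's compactness criterion for $F$ directly: extract from any sequence $\{h_n\}$ a subsequence along which $f(\cdot+h_n)$ converges uniformly and $F(h_n)$ converges in $\R$ (using boundedness of $F$), and then upgrade the resulting uniform-on-compacta convergence of $F(\cdot+h_n)$ to uniform convergence on $\R$ using the same mean-zero $+$ bounded-antiderivative estimate. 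Either way, once this uniform estimate is in hand, $F$ is a uniform limit of almost-periodic functions and is therefore itself almost-periodic, concluding the proof.
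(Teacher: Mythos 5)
The paper itself offers no proof of this theorem: it is quoted verbatim from Bohr's monograph \cite{Bohr} (pp.~39, 44, 58) and used as a black box, so your proposal can only be judged on its own terms. The ``only if'' direction and the normalization $M\{f\}=0$ are fine. The difficulty is exactly the step you flag as decisive, and the estimate you propose to close it with is false. You want $\|F-\Sigma_N\|_\infty$ controlled in terms of $\|f-\sigma_N\|_\infty$ for a mean-zero almost-periodic function whose antiderivative is bounded; but $f_\epsilon(x)=\epsilon\sin(\epsilon x)$ is a mean-zero trigonometric polynomial with $\|f_\epsilon\|_\infty=\epsilon$ whose antiderivative $1-\cos(\epsilon x)$ has sup norm $2$ for every $\epsilon>0$. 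Hence no inequality of the form $\|R\|_\infty\le C\,\|R'\|_\infty$ can hold without a lower bound on the frequencies present, and the Bohr--Fourier exponents of an almost-periodic function may accumulate at $0$ --- they do for the function $\mathfrak F(x)=\sum n^{-2}\cos(x/n)$ at the heart of this very paper, whose exponents are $1/n$. The small divisors $1/\lambda_k^{(N)}$ in $\Sigma_N$ are therefore a genuine obstruction, not a technicality: the coefficients $c_k^{(N)}/(i\lambda_k^{(N)})$ are not even obviously bounded, and showing that they are square-summable (which is what ultimately makes $\Sigma_N\to F$ work) essentially presupposes that $F$ is almost periodic. Your alternative Bochner-criterion route leans on the same unproved estimate, so it inherits the same gap.

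The classical proof (Bohr; see also Besicovitch, Ch.~I) avoids Fourier analysis altogether. Set $M=\sup F$, $m=\inf F$. One first shows that for each $\epsilon>0$ the sets $\{x:F(x)>M-\epsilon\}$ and $\{x:F(x)<m+\epsilon\}$ are relatively dense; this is the only place boundedness of $F$ is used, via a translation (normal families) argument. Then, given any $x$, choose a near-maximum point $x_1$ and a near-minimum point $x_2$ within distance $L=L(\epsilon)$ of $x$, and let $\tau$ be an $\eta$-almost-period of $f$ with $\eta L<\epsilon/4$. From
\[
\bigl[F(x+\tau)-F(x)\bigr]-\bigl[F(x_i+\tau)-F(x_i)\bigr]=\int_{x_i}^{x}\bigl(f(t+\tau)-f(t)\bigr)\,dt,
\]
together with $F(x_1+\tau)\le M\le F(x_1)+\epsilon/4$ and $F(x_2+\tau)\ge m\ge F(x_2)-\epsilon/4$, one squeezes $F(x+\tau)-F(x)$ into $(-\epsilon,\epsilon)$ uniformly in $x$, so every such $\tau$ is an $\epsilon$-almost-period of $F$. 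If you wish to salvage your Fourier-theoretic route instead, the missing ingredient you must supply first is a uniform (in $N$) bound on the coefficients $a(\lambda_k)/\lambda_k$; that is where all the work lies.
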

Let 
\[  \mathfrak{F}(x)= \sum_{n=1}^{\infty} \frac{1}{n^2}\cos \frac{x}{n}. \]
The series defining $ \mathfrak{F}(x)$ is uniformly convergent on the real line. The partial sums
\[ \mathfrak{F}_n(x) =    \sum_{p=1}^{n} \frac{1}{p^2}\cos \frac{x}{p}     \]
are  almost periodic \cite{Bohr} (Corollary, p.38), and then $ \mathfrak{F}(x)$ is also almost periodic \cite{Bohr} (Theorem IV, p.38). It is interesting to note that $\mathfrak{F}_n$  is periodic of period $p_n= \displaystyle \lcm(1,2,\cdots,n)= e^{\psi(n)}             $, with $\psi(x)$ is the Chebyshev function, given by $\displaystyle 
\psi(x)=     \sum_{p\leq x} \Lambda(p)      $, where $\Lambda(n)$ is the Mangoldt function.

The prime number theorem asserts that    $\displaystyle p_n= e^{n(1+o(1))} $ as $n \to \infty$ \cite{Tenenbaum} (p.261). Actually $\displaystyle p_n\leq 3^n$.
 
\begin{lemma}\label{limit} We have
\[ \lim_{x\to +\infty }  \frac{1}{x} \sum_{n=1}^{\infty} \sin^2\frac{x}{n}= \frac{\pi}{2}. \]
\end{lemma}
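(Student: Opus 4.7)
The plan is to recognize $S(x) := \sum_{n\geq 1}\sin^2(x/n)$ as a Riemann-type sum approximating the classical integral $\int_0^{\infty} (\sin^2 t)/t^2\,dt = \pi/2$. First I would observe, via the substitution $t = x/u$, the identity
$$\int_0^\infty \sin^2(x/u)\,du \;=\; x\int_0^\infty \frac{\sin^2 t}{t^2}\,dt \;=\; \frac{\pi x}{2}.$$
So it suffices to show that $S(x) - \pi x/2 = o(x)$ as $x\to+\infty$; in fact I expect the quantitative bound $S(x) = \pi x/2 + O(\sqrt{x})$.

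To carry out the comparison I would associate to each $n\geq 1$ the cell $I_n := [n-1/2,\,n+1/2]$ (or $[n,n+1]$, choice of no importance) and write
$$S(x) - \int_{1/2}^{\infty}\sin^2(x/u)\,du \;=\; \sum_{n\geq 1}\Bigl(\sin^2(x/n) - \int_{I_n}\sin^2(x/u)\,du\Bigr).$$
The key input is the derivative bound
$$\Bigl|\frac{d}{du}\sin^2(x/u)\Bigr| \;=\; \frac{x}{u^2}\,|\sin(2x/u)| \;\leq\; \frac{x}{u^2},$$
so the oscillation of $u\mapsto \sin^2(x/u)$ on the unit cell $I_n$ is at most $Cx/n^2$ (for $n\geq 2$), and trivially at most $1$ for every $n$. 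Hence each summand above is bounded by $\min(1,\,Cx/n^2)$.

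Combined with the trivial bound $\int_0^{1/2}\sin^2(x/u)\,du \leq 1/2$, splitting the error sum at $n\approx\sqrt{x}$ yields
$$\Bigl|S(x) - \frac{\pi x}{2}\Bigr| \;\ll\; \sum_{n\leq \sqrt{x}}1 \;+\; \sum_{n>\sqrt{x}}\frac{x}{n^2} \;+\; O(1) \;=\; O(\sqrt{x}),$$
from which the lemma follows on dividing by $x$ and letting $x\to +\infty$.

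The only delicate range is small $n$, where $\sin^2(x/u)$ oscillates rapidly across $I_n$ and the derivative estimate is useless; but since $|\sin^2|\leq 1$ the trivial bound absorbs this regime, so no genuine obstruction appears. The conceptual heart of the argument is the involution $u\leftrightarrow x/u$: it is what converts the ``harmonic'' distribution of the points $x/n$ into a controlled midpoint Riemann sum for $\sin^2(t)/t^2$, and it is what organizes the two regimes (Taylor-like decay for $n\gg\sqrt{x}$ versus oscillation for $n\ll\sqrt{x}$) that drive the sum.
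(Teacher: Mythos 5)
Your proof is correct, but it follows a genuinely different route from the paper. The paper splits the sum at $n_x=\floor{2x/\pi}$ and treats the two ranges by different devices: for $n\le n_x$ it rescales ($t=n/x$) and invokes convergence of Riemann sums of the bounded, Riemann-integrable function $t\mapsto \sin^2(1/t)$ on $[0,2/\pi]$, yielding $\int_{\pi/2}^{\infty}\sin^2(u)\,u^{-2}\,du$; for $n>n_x$ it uses that $t\mapsto\sin^2(x/t)$ is monotone decreasing on $(2x/\pi,\infty)$ to compare sum and integral with error at most $1$, yielding $\int_0^{\pi/2}\sin^2(u)\,u^{-2}\,du$; the two pieces reassemble to $\pi/2$. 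You instead compare the entire sum with $\int\sin^2(x/u)\,du$ in one stroke, cell by cell, bounding each discrepancy by $\min(1,Cx/n^2)$ via the derivative estimate and splitting at $n\asymp\sqrt{x}$. What your argument buys is a quantitative error term, $\sum_{n\ge1}\sin^2(x/n)=\tfrac{\pi x}{2}+O(\sqrt{x})$, which the paper's Riemann-sum step does not deliver (it gives only $o(x)$ for the head of the sum); what the paper's argument buys is the avoidance of any derivative computation, replacing it with monotonicity on the tail and bare Riemann integrability on the head, at the cost of the slightly fussy threshold $\floor{2x/\pi}$ where monotonicity begins. Both proofs are complete and elementary; yours is arguably the more robust template, since the same oscillation-versus-trivial-bound dichotomy applies verbatim to the weighted variants $\sum n^{-1}\sin(x/n)$ discussed elsewhere in the paper.
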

Let $x>0$ and $\displaystyle n_x= \floor {\frac{2x}{\pi}}  $. The function $\displaystyle h: x\to \sin^2 \frac{1}{x}$, being bounded on $[0, \frac{\pi}{2}]  $ and continuous on each
 $\displaystyle [\alpha, \frac{\pi}{2}    ]    $, is Riemann-integrable on $[0, \frac{\pi}{2}]  $, so by considering Riemann sums
 \begin{equation}\label{Flett1}
 \lim_{x\to +\infty }  \frac{1}{x} \sum_{n=1}^{n_x} \sin^2\frac{x}{n}= \int_0^{ \frac{2}{\pi} }\sin^2 \frac{1}{t}\,dt= \int_{ \frac{\pi}{2} }^{\infty} \frac{\sin^2u}{u^2}\,du.
 \end{equation}
For $x>0$ the function of $g(t)= \sin^2 \frac{x}{t}$ is decreasing on $\displaystyle (\frac{2x}{\pi},\; +\infty)$ and thus
\begin{equation}\label{Flett2}
\left \vert  \sum_{n=n_x+1}^{\infty} \sin^2\frac{x}{n}-\int_{\frac{2x}{\pi}}^{\infty} \sin^2 \frac{x}{t}\,dt     \right \vert \leq 1.
\end{equation}
Since $ \displaystyle   \int_{\frac{2x}{\pi}}^{\infty} \sin^2 \frac{x}{t}\,dt= x\int_0^{\frac{\pi}{2} }  \frac{ \sin^2 u}{u^2}\,du      $ we deduce the lemma from  \eqref{Flett1} \eqref{Flett2} and the relations
\[    \int_0^{\infty }  \frac{ \sin^2 u}{u^2}\,du = \int_0^{\infty }  \frac{ \sin u}{u}\,du= \frac{\pi}{2}.         \]
\begin{corollary}
The function $\displaystyle {\mathfrak f}(x) = \displaystyle \sum_{n=1}^{\infty} \frac{1}{n}  \sin  \frac{x}{n} $  is not bounded on the real line.
\end{corollary}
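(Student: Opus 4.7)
The plan is to argue by contradiction: assuming $\mathfrak{f}$ is bounded on $\R$, I would derive an incompatibility with \lemref{limit} by way of the antiderivative theorem. The central observation is that $\mathfrak{f}$ is the primitive of the almost periodic function $\mathfrak{F}(x)=\sum_{n=1}^{\infty}\frac{1}{n^2}\cos\frac{x}{n}$ introduced just before \lemref{limit}. Differentiating the series defining $\mathfrak{f}$ term by term yields the series for $\mathfrak{F}$, and this is legitimate because the differentiated series is dominated uniformly in $x$ by $\sum 1/n^2$ (Weierstrass $M$-test). Since $\mathfrak{f}(0)=0$, this gives
\[
\mathfrak{f}(x)=\int_0^x \mathfrak{F}(t)\,dt.
\]
Applied to the almost periodic function $\mathfrak{F}$, \thmref{anti} forces $\mathfrak{f}$ to be almost periodic whenever it is bounded.

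Under that hypothesis, the mean value theorem combined with the oddness of $\mathfrak{f}$ gives $M\{\mathfrak{f}\}=0$, hence
\[
\lim_{T\to\infty}\frac{1}{T}\int_0^T \mathfrak{f}(x)\,dx = 0.
\]
On the other hand, I compute the same integral directly. The estimate $\frac{1}{n}|\sin(x/n)|\leq T/n^2$ on $[0,T]$ together with the Weierstrass $M$-test legitimizes termwise integration, and produces
\[
\int_0^T \mathfrak{f}(x)\,dx=\sum_{n=1}^{\infty}\Bigl(1-\cos\frac{T}{n}\Bigr)=2\sum_{n=1}^{\infty}\sin^2\frac{T}{2n}.
\]
Applying \lemref{limit} with $x$ replaced by $T/2$ gives $\int_0^T \mathfrak{f}(x)\,dx\sim \pi T/2$ as $T\to\infty$, so $\frac{1}{T}\int_0^T \mathfrak{f}\to\pi/2\neq 0$, contradicting the vanishing mean value.

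The conceptual heart of the argument is the identity $\mathfrak{f}'=\mathfrak{F}$, which lets \thmref{anti} bite; once this is in place the contradiction is mechanical. The only point that requires care is the justification of the two termwise manipulations (differentiation of $\mathfrak{f}$ to obtain $\mathfrak{F}$, and integration of $\mathfrak{f}$ over $[0,T]$), both of which reduce to the Weierstrass $M$-test through the bound $|\sin y|\leq |y|$. I expect no further obstacle beyond the careful tracking of the constants and the factor of $2$ coming from $1-\cos y=2\sin^2(y/2)$, which combined with \lemref{limit} at $T/2$ produces exactly the asymptotic $\pi T/2$ needed to clash with oddness.
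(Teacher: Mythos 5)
Your proof is correct and follows the paper's argument exactly: assume boundedness, invoke the antiderivative theorem via $\mathfrak{f}'=\mathfrak{F}$ to get almost periodicity, use oddness to force a zero mean, and contradict this with Lemma~\ref{limit}. You merely make explicit the link $\int_0^T\mathfrak{f}(x)\,dx=2\sum_n\sin^2\frac{T}{2n}$ and the uniform-convergence justifications that the paper leaves implicit, which is a welcome clarification but not a different route.
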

\begin{proof}
Assume that $\displaystyle {\mathfrak f}(x) $ is bounded on $\R$ then it would be almost periodic by the antiderivative theorem \eqref{anti} and the remark that  $ \displaystyle {\mathfrak f'}(x)= \displaystyle {\mathfrak F}(x) $.
 Since $\displaystyle {\mathfrak f}(x) $ is odd its mean is zero. This is in contradiction with the limit $\displaystyle\frac{\pi}{2} $ given by the Lemma \eqref{limit}.
\end{proof}
\begin{remark}
 The same analysis applies to the series   $\displaystyle \sum_{n=1}^{\infty} \frac{\chi(n)}{n} \sin(\frac{x}{n}),\,\chi$ being a Dirichlet character modulo $N$. 
\end{remark}

We will need to consider some Bessel functions. We recall that for $ \Re s>0$ the $\Gamma$-function is
 \[ \Gamma(s)= \int_0^{\infty} u^{s-1} e^{-u}\, du.\]
By Fubini's theorem
 \[ \Gamma^2(s)= \int_0^{\infty}  \int_0^{\infty}(uv)^{s-1} e^{-(u+v)}\, dudv= \int_0^{\infty} u^{s-1} \xi_0(u)\,du,\]
 where
 \[\xi_0(u)= \int_1^{\infty}\frac{2e^{2t\sqrt{u}}}{\sqrt{t^2-1}}\,dt.\]
More generally the iterated integrals \cite{Voronoi1}, \cite{Voronoi2}
\[\xi_1(x)= \int_x^{\infty} \xi_0(t)\,dt, \cdots, \xi_k(x)= \int_x^{\infty} \xi_{k-1}(t)\, dt.\]
satisfy the differential equation of Bessel type:
\[x\frac{d^2\xi_k(x)}{dx^2}+(1-k)\frac{d\xi_k(x)}{dx}-\xi_k(x) =0.\]

The ordinary Bessel function of order $\nu$ is 
\[J_{\nu}(z)= \sum_{m=0}^{\infty} \frac{(-1)^m (z/2)^{2m+\nu}}{m! \Gamma(m+1+\nu)},\quad I_{\nu}(z)= i^{-\nu}J_{\nu}(iz),\quad  \vert z\vert <\infty. \]

The $K$-Bessel function of order $\nu$, for $\nu $  not an integer,  is
\[K_{\nu}(z)= \frac{\pi}{2} \frac{I_{-\nu}(z)-  I_{\nu}(z)}{\sin \pi \nu}. \]
 When $\nu$ is an integer we take the limiting value. It could be also defined by
\begin{equation}\label{Bessel1}
K_{\nu}(z)= \frac{1}{2}\int_0^{\infty} t^{\nu-1} e^{-z/2(t+1/t)} dt,\quad \Re \nu \geq 0.
\end{equation}
The Mellin transform of the $J_0$-Bessel function is: \[\int_0^{\infty } J_0(\sqrt{x}) x^{s-1}dx= 4^s \frac{\Gamma(s)}{\Gamma(1-s)}. \]
We will need two Mellin transforms, due essentially to Voronoi
\[
\int_0^{\infty} x^{s-1} K_0(4\pi \sqrt{x})\,dx= \frac{1}{2} (2\pi)^{-2s} \Gamma^2(s), \]
 \[ \int_0^{\infty} x^{s-1} Y_0(4\pi \sqrt{x})\,dx= -\frac{1}{\pi} (2\pi)^{-2s}\cos (\pi s) \Gamma^2(s).
\]

\subsubsection{Summations formulas and beyond}
Various classical summation formulas, as Poisson summation formula, Voronoi summation formula or Hardy-Ramanujan summation formula can all be given a unified formulation.
The following  Generalized Poisson summation formula  is proved in \cite{Cohen}
\begin{thm}
Let $a= a(n)$ be an arithmetic function with moderate growth. We define the Dirichlet series
		\[L(a, s)= \sum_{n=1}^{\infty} a(n)n^{-s},\quad \Re s>1\]
and we suppose that $ L(a, s)$ has an analytic continuation to $\mathbb C$ with only a possible pole
 at $s = 1$. We suppose also that there are  positive constants $A, a_1,\cdots ,a_g $ such that  with the $\Gamma$-factors
\[\gamma(s)=  A^s \prod_{j=1}^g \Gamma(a_j s)\]
$L(a, s)$ satisfies the functional equation \[\gamma(s)L(a, s) = \gamma(1-s)L(a, 1-s).\] Furthermore
for $f\in {\mathcal S}(\mathbb R)$, the Schwartz space, we define a very special Hankel' s transform:
\[g(x)= \int_0^{\infty} f(y)K(xy)dy, \quad {\rm with }\quad     K(x) = \int_{\Re s= \frac{3}{2}} \frac{\gamma(s)}{\gamma(1-s)} x^{-s} ds.\]
 Then, 
\[\sum_{n=1}^{\infty}a(n)f(n)= f(0)L(a, 0) + {\rm Res}_{s=1} {\mathcal M} (f)(s)L(a, s) + \sum_{n=1}^{\infty}a(n)g(n)\]
where $\displaystyle  {\rm Res}_{s=1}     $ is the evaluation of the residue at $s=1$.
\end{thm}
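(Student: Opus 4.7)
The plan is to prove this by Mellin--inverting $f$, unfolding the sum into a contour integral of $\mathcal{M}(f)(s)L(a,s)$, shifting the contour across the poles at $s=1$ and $s=0$, and then applying the functional equation in the remainder integral to recognise the Hankel-type kernel $K$.

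First, since $f\in\mathcal{S}(\R)$, its Mellin transform $\mathcal{M}(f)(s)=\int_0^{\infty}f(y)y^{s-1}\,dy$ is holomorphic in $\{\Re s>0\}$ and continues meromorphically to $\C$ with simple poles at $s=0,-1,-2,\dots$ of residues $f(0),-f'(0),f''(0)/2!,\dots$, as one sees by integrating by parts against the Taylor expansion of $f$ at $0$. For $c>1$ the Mellin inversion formula gives $f(y)=\frac{1}{2\pi i}\int_{(c)}\mathcal{M}(f)(s)y^{-s}\,ds$, and inserting this into $\sum a(n)f(n)$ together with the moderate growth hypothesis on $a$ (to swap sum and integral absolutely) yields
\[\sum_{n=1}^{\infty}a(n)f(n)=\frac{1}{2\pi i}\int_{(c)}\mathcal{M}(f)(s)\,L(a,s)\,ds.\]

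Next I would shift the line of integration from $\Re s=c$ to $\Re s=1-c$, picking up the residue of $L(a,s)$ at $s=1$ (contributing $\mathrm{Res}_{s=1}\mathcal{M}(f)(s)L(a,s)$) and the residue of $\mathcal{M}(f)$ at $s=0$ (contributing $f(0)L(a,0)$). On the shifted line I apply the functional equation $L(a,s)=\gamma(1-s)\gamma(s)^{-1}L(a,1-s)$, and then substitute $s\mapsto 1-s$ to return to a line $\Re s=c$, taking $c=3/2$ to match the definition of $K$. The integral becomes
\[\frac{1}{2\pi i}\int_{(3/2)}\mathcal{M}(f)(1-s)\,\frac{\gamma(s)}{\gamma(1-s)}\,L(a,s)\,ds.\]
Expanding $L(a,s)=\sum_n a(n)n^{-s}$ and $\mathcal{M}(f)(1-s)=\int_0^{\infty}f(y)y^{-s}\,dy$, and exchanging the orders of summation and of the two integrals, produces
\[\sum_{n=1}^{\infty}a(n)\int_0^{\infty}f(y)\Bigl(\frac{1}{2\pi i}\int_{(3/2)}\frac{\gamma(s)}{\gamma(1-s)}(ny)^{-s}\,ds\Bigr)dy=\sum_{n=1}^{\infty}a(n)g(n),\]
where the inner contour integral is exactly $K(ny)$. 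Assembling the three contributions completes the identity.

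The main obstacle is not the residue calculus but the legitimacy of the repeated interchanges and of the contour shift. Each rests on uniform control of $\mathcal{M}(f)(s)L(a,s)\gamma(s)/\gamma(1-s)$ on vertical strips of increasing height: the Schwartz regularity of $f$ gives rapid decay of $\mathcal{M}(f)$ in $|\Im s|$ (by repeated integration by parts), Stirling's asymptotics applied to $\gamma(s)=A^s\prod_j\Gamma(a_j s)$ handle the ratio of $\Gamma$-factors, and a convexity (Phragm\'en--Lindel\"of) bound deduced from the functional equation controls $L(a,s)$ on vertical lines. Once these estimates are installed the horizontal pieces of the shifted contour vanish in the limit and every Fubini-type swap is absolute, so the claim follows.
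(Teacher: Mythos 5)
The paper does not actually prove this theorem; it is quoted from Cohen's \emph{Number Theory, Volume II} with a citation, so there is no in-paper argument to compare against. Your proposal is the standard (and correct) derivation of such generalized Poisson/Voronoi formulas, and it is the one carried out in the cited source: Mellin inversion of $f$, unfolding to $\frac{1}{2\pi i}\int_{(c)}\mathcal{M}(f)(s)L(a,s)\,ds$ for $c>1$, shifting to $\Re s=1-c$ past the pole of $L$ at $s=1$ and the pole of $\mathcal{M}(f)$ at $s=0$ (residue $f(0)$), applying the functional equation and substituting $s\mapsto 1-s$ to land on $\Re s=\tfrac32$, and refolding the resulting integral into $\sum_n a(n)g(n)$. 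Your identification of the analytic issues (rapid decay of $\mathcal{M}(f)$ on vertical strips, Stirling for $\gamma(s)/\gamma(1-s)$, Phragm\'en--Lindel\"of for $L$) is exactly what is needed to justify the contour shift and the Fubini swaps. Two small points: the kernel as printed in the statement, $K(x)=\int_{\Re s=3/2}\frac{\gamma(s)}{\gamma(1-s)}x^{-s}\,ds$, must carry the normalization $\frac{1}{2\pi i}$ for your computation (and for the sanity check $K(x)=2\cos(2\pi x)$ when $a\equiv 1$) to come out right --- this is a typo in the paper's transcription, and your derivation correctly produces the normalized version; and the residue of $\mathcal{M}(f)$ at $s=-k$ is $f^{(k)}(0)/k!$ without the alternating sign you wrote, though only the $k=0$ case enters and you have that one right.
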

\subsubsection{2 classical choices}
\begin{enumerate}
\item For $ a(n)= 1$ we have $L(a,s)= \zeta(s)$ and 
 \[\gamma(s)=   \pi^{-\frac{s}{2}}  \Gamma(\frac{s}{2}),\quad 
 K(x)= 2\cos (2\pi x).\]
We recover Poisson summation formula for even functions in $f(x)\in {\mathcal S}(\R)$: 
\begin{equation}\label{Poisson} 
\sum_{n=1}^{\infty} f(n)= -\frac{1}{2}f(0)+ \int_0^{\infty} f(x) dx +2 \sum_{n=1}^{\infty}  \int_0^{\infty} f(y) \cos (2\pi ny) dy.
\end{equation}
 \item If $a(n)= d(n)$ we have $ L(d, s)= \zeta^2(s)$ and 
 \[\gamma(s)=  \pi^{-s} \Gamma(\frac{s}{2})^2,\quad
 \frac{\gamma(s)}{\gamma(1-s)}= (2\pi)^{-2s} (2+2\cos \pi s) \Gamma(s)^2\]
 and
 \[K(x)= 4 K_0(4\pi \sqrt{x})- 4 Y_0(4\pi \sqrt{x}).\]
 \end{enumerate}
 We recover Voronoi summation formula 
\begin{align*}
\sum_{n=1}^{\infty}f(n)d(n)& = \frac{1}{4}f(0)+ \int_0^{\infty} f(x)\left(2\gamma+ \log x \right) dx+\\ 
\sum_{n=1}^{\infty} d(n)&  \int_0^{\infty} f(y)
\left(4 K_0 \left(4\pi (ny)^{\frac{1}{2}} \right)-2\pi Y_0 \left(4\pi (ny)^{\frac{1}{2}} \right)\right) dy.
\end{align*}
As a consequence we have  Koshliakov’s formula valid for $a>0$:
\[\sqrt{a}\left(\gamma-\log\left(\frac{4\pi}{a}\right)   +4 \sum_{n=1}^{\infty} d(n) K_0(2\pi an)      \right) \]
\[= \frac{1}{\sqrt{a}}\left(\gamma-\log(4\pi a)  +4 \sum_{n=1}^{\infty} d(n) K_0\left(\frac{2\pi n}{a}\right)      \right).\]
This formula was proved by Ramanujan about ten years earlier (He did not appeal to Voronoi’s formula) and by many authors later.
\subsection{Another function of Hardy and Littlewood}
 Hardy and Littlewood gave in \cite{Hardy} (p.269) the following relation
\begin{align}\label{Hardy2}
{\mathfrak F}(z)= \sum_{n=1}^{\infty} \frac{1}{n}\left(1-e^{-z/n} \right)&= 2\log z+ 2\gamma\\
&-2\sum_{n=1}^{\infty} \left\{ K_0\left(\sqrt{2n\pi iz}\right)+    K_0\left(\sqrt{-2n\pi iz}\right)                   \right\} \nonumber
\end{align}
where $\Re z>0 , \gamma$ is Euler's constant.

For $\vert z \vert< 1$:
\begin{equation}\label{Segal2}
\sum_{n=1}^{\infty} \frac{1}{n} \left(1-e^{-\frac{z}{n}}\right)= -\sum_{n=1}^{\infty}\zeta(n+1) \frac{(-z)^n}{n!}.
\end{equation}
An immediate consequence of this expansion is obtained by taking real and imaginary parts with $z= ix, x\in \R, \vert x\vert<1$:
\begin{eqnarray*}
\sum_{n=1}^{\infty} \frac{1}{n} \left(1-\cos\frac{x}{n}\right)&= \displaystyle2\sum_{n=1}^{\infty} \frac{1}{n} \sin^2\frac{x}{2n}= - \sum_{k\geq 0} \zeta(4k+1)\frac{x^{4k}}{(4k)!}+ \sum_{k\geq 0} \zeta(4k+3)\frac{x^{4k+2}}{(4k+2)!}\\
&= \displaystyle \sum_{k\geq 0}(-1)^{k-1} \zeta(2k+1)\frac{x^{2k}}{(2k)!}
\end{eqnarray*}

\begin{equation*}
\sum_{n=1}^{\infty} \frac{1}{n} \sin\frac{x}{n}= \sum_{k\geq 0} \zeta(4k+2)\frac{x^{4k+1}}{(4k+1)!}-  \sum_{k\geq 0} \zeta(4k+4)\frac{x^{4k+3}}{(4k+3)!}.
\end{equation*}
More generally we define the series
 \[G_{\nu}(z)= \sum_{n> \Re \nu+1} \zeta(n-\nu) \frac{(-z)^n}{n!}\]
which has   a Mellin-Barnes type integral representation when  $x>0, c$ is fixed with $\Re \nu +1<c< \Re \nu  +2 $:
\begin{equation*}
G_{\nu}(z)= \frac{1}{2i\pi} \int_{(c)} \Gamma(-s) \zeta(s-\nu) x^s ds.
\end{equation*}
The proof of the main equality results from the deformation of the path of integration and the fact that the pair
\[x^{\nu}K_{\nu} (x),\quad 2^{s+\nu-2}\Gamma(s/2) \Gamma(s/2+\nu),\quad \Re s>  {\max}(0, -2\Re \nu) \]
is a pair of Mellin transforms \cite{Katsurada1}, \cite{Katsurada2}.\\
The series \eqref{Segal2} has many remarkable properties. It may be differentiated term by term to get
$    {\mathfrak G}(-x)     $ where $    {\mathfrak G}(x)$ is the function  defined in \cite{Tenenbaum} (p.243):
\begin{equation}\label{Segal3}
 {\mathfrak G}(x)= \sum_{n=1}^{\infty} \frac{1}{n^2} e^{\frac{x}{n}}.
\end{equation}
The following formula is mentioned in \cite{Tennenbaum} 
\[\lim_{K\to \infty}\frac{1}{K} \sum_{k=1}^K{\mathfrak G}(2i\pi nk)= \sum_{d\mid n} \frac{1}{d^2}.\]
\subsection{ Laplace transform of $a \sqrt{t} J_1(a\sqrt{t}),\; a>0,\; t>0$, another approach to Segal's formula}
In \cite{Segal} (formula (12)) Segal proves the following result
\begin{thm}
If $\displaystyle g(z):= \sum_{k \geq 1} (1- \cos \frac{z}{k})$ then
\[g(z)= \frac{\pi z}{2}- \frac{1}{2}+ \frac{1}{4} \sum_{k \geq 1} \frac{2 \sqrt{2k\pi} \sqrt{z}}{k} J_1(2 \sqrt{2k\pi} \sqrt{z}).\]
\end{thm}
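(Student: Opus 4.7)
\textbf{Overall approach.} I will verify the identity by computing the Laplace transform (in the variable $z$) of each side and invoking uniqueness. The subsection title signals that this is the intended route, and the computation splits cleanly into an arithmetic piece (Mittag--Leffler expansion for $\coth$) and an analytic piece (a Bessel Laplace transform).

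\textbf{Laplace transform of the left-hand side.} The termwise transform
\[
\mathcal{L}\bigl\{1-\cos(z/k)\bigr\}(s)\;=\;\frac{1}{s}-\frac{s}{s^{2}+k^{-2}}\;=\;\frac{1}{s\,(1+k^{2}s^{2})}
\]
is summable in $k$ for $\Re s>0$, and dominated convergence justifies termwise summation. Then the classical Mittag--Leffler expansion
\[
\sum_{k=1}^{\infty}\frac{1}{k^{2}+a^{2}}\;=\;\frac{\pi\coth(\pi a)}{2a}-\frac{1}{2a^{2}},
\]
applied with $a=1/s$, yields
\[
\mathcal{L}\{g\}(s)\;=\;\frac{\pi\coth(\pi/s)}{2s^{2}}-\frac{1}{2s}.
\]
Using $\coth(\pi/s)=1+2\sum_{k\ge1}e^{-2k\pi/s}$ I split this as
\[
\mathcal{L}\{g\}(s)\;=\;\underbrace{\frac{\pi}{2s^{2}}}_{\mathcal{L}\{\pi z/2\}}\;\underbrace{-\frac{1}{2s}}_{\mathcal{L}\{-1/2\}}\;+\;\frac{\pi}{s^{2}}\sum_{k=1}^{\infty}e^{-2k\pi/s}.
\]

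\textbf{Laplace transform of the Bessel series.} The Hankel--Laplace identity
\[
\mathcal{L}\bigl\{\sqrt{t}\,J_{1}(a\sqrt{t})\bigr\}(s)\;=\;\frac{a}{2s^{2}}\,e^{-a^{2}/(4s)},\qquad a>0,
\]
taken with $a=2\sqrt{2k\pi}$, gives
\[
\mathcal{L}\!\left\{\tfrac{1}{4}\cdot\tfrac{2\sqrt{2k\pi}\sqrt{z}}{k}\,J_{1}(2\sqrt{2k\pi z})\right\}(s)\;=\;\frac{\pi}{s^{2}}\,e^{-2k\pi/s}.
\]
Summing over $k\ge1$ reproduces precisely the remainder term from the previous step. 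Laplace uniqueness for functions of at most polynomial growth (here $g(z)=O(z)$, as follows from $g'(z)=\mathfrak f(z)$ together with the $O\bigl((\log z)^{3/4+\varepsilon}\bigr)$ bound of Hardy--Littlewood) then yields Segal's identity.

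\textbf{Expected obstacles.} The arithmetic side is routine once Mittag--Leffler is invoked. The delicate point is the termwise Laplace \emph{inversion} of $\sum_{k\ge1}\frac{\pi}{s^{2}}e^{-2k\pi/s}$. Since $J_{1}(x)=O(x^{-1/2})$ as $x\to\infty$, the $k$-th term on the Bessel side is of order $k^{-3/4}$, so the series converges only conditionally and pointwise; to legitimize the inversion I would establish uniform convergence of the partial sums on compact subsets of $(0,\infty)$ using the standard oscillatory asymptotics of $J_1$ (Abel summation against the oscillating factor $\cos(2\sqrt{2k\pi z}-3\pi/4)$), and truncate the computation of $\mathcal{L}\{g\}(s)$ accordingly. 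An alternative is to verify equality of the two sides for $z>0$ along a single vertical contour via Mellin--Barnes, using the Mellin transform of $J_1$ already tabulated in the preceding paragraph of the paper. Either path completes the proof.
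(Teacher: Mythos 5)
Your proposal is correct and follows essentially the same route as the paper: Laplace-transform both sides, use Watson's formula $\mathcal{L}\{\sqrt{t}\,J_1(a\sqrt{t})\}(p)=\frac{a}{2p^2}e^{-a^2/(4p)}$ on the Bessel series, and conclude by injectivity of the Laplace transform. The only cosmetic difference is that you invoke the Mittag--Leffler expansion of $\coth$ where the paper uses the equivalent partial-fraction identity $\frac{z}{e^z-1}=1-\frac{z}{2}+\sum_{k\ge1}\frac{2z^2}{z^2+4k^2\pi^2}$ at $z=2\pi/p$; your closing remarks on justifying termwise inversion are a welcome addition the paper passes over in silence.
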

\noindent This formula is interesting compared to \eqref{Hardy2}, as we have for real $z,\, g(z)= \Re{\mathfrak F}(iz) $.  
 The proof given in \cite{Segal}  uses a rather elaborated tools such the three Bessel functions $J_1, J_2, J_3$, the functional equation of the Riemann $\zeta$-function etc.  We give here a  proof which we think is simpler.
 
Let \[g_1(z)= \frac{\pi z}{2}- \frac{1}{2}+ \frac{1}{4} \sum_{k \geq 1} \frac{2 \sqrt{2k\pi} \sqrt{z}}{k} J_1(2 \sqrt{2k\pi} \sqrt{z}).\]

  In the Laplace transform 
\[
{\mathcal L}(a \sqrt{t} J_1(a\sqrt{t}))(p) = a \int_0^{+ \infty} \sqrt{t} J_1(a\sqrt{t}) e^{-tp} dt ,\quad \Re\, p >0
\]
we set $u^2=t$ and obtain
\[
{\mathcal L}(a \sqrt{t} J_1(a\sqrt{t}))(p) = 2a \int_0^{+ \infty}  J_1(a u) e^{-p u^2} u^2 du ,\quad \Re\, p >0.
\]
According to \cite{Watson} (page 394, formula(4)) we have for with $\vert \Arg \; p \vert < \frac{\pi}{4} $
\[
\int_0^{+ \infty} J_\nu(au) e^{-p^2 u^2} u^{\nu+1} du = \frac{a^\nu}{(2p^2)^{\nu+1}} e^{ - \frac{a^2}{4p^ 2}}.
\]
Replacing   $p$ by $\sqrt{p}$ with  $\vert \Arg \, p \vert < \frac{\pi}{2}$ and  taking $\nu= 1 $ we obtain
\[\displaystyle
\int_0^{+ \infty} J_1(au) e^{-p u^2} u^2 du = \frac{a}{4p^2}e^{- \frac{a^2}{4p}}.
\]
Hence 

\[
{\mathcal L}(a \sqrt{t} J_1(a \sqrt{t} ))(p) = \frac{a^2}{2 p^2} e^{ \frac{-a^2}{4 p}} \quad \Re p>0.
\]
Note that $a \sqrt{t} J_1(a\sqrt{t}) $ is not in $L^2([0,+ \infty[)$ since its Laplace transform is not bounded in the $L^2$-norm  on the lines $\Re p= c$. With $a=2 \sqrt{2k\pi} $ we get
\[
{\mathcal L}(2 \sqrt{2 k \pi t} J_1(2 \sqrt{2k\pi t} )) (p) = \frac{4 k \pi}{p^2} e^{-\frac{2k\pi}{p}}. 
\]
As we have
\[
{\mathcal L}( \frac{\pi t-1}{2} )(p) = \frac{\pi}{2 p^2}- \frac{1}{2p}
\]

and, by continuity, the Laplace transform of the sum in
\[
g_1(t) = \frac{\pi t-1}{2}+ \frac{1}{4} \sum_{k \geq 1} \frac{2 \sqrt{2k\pi t} }{k} J_1(2 \sqrt{2 k \pi t})
\]
 is
\[
\frac{\pi}{p^2} \sum_{k \geq 1} (e^{-\frac{2 \pi}{p}})^k
\]
which converges in  $\Re p>0$, the Laplace transform of  $g_1(t)$ is
\[
\frac{\pi}{2p^2} -\frac{1}{p}+ \frac{\pi}{p^2} \frac{e^{-\frac{2 \pi}{p}}}{1-e^{-\frac{2 \pi}{p}}}=\frac{\pi}{2p^2} -\frac{1}{p}+ \frac{\pi}{p^2} \frac{1}{e^{\frac{2 \pi}{p}}-1}
.\]
On the other hand
\[{\mathcal L}(1-\cos\frac{t}{k})(p)= \frac{1}{p}-\frac{p}{p^2+k^{-2}}= \frac{1}{p(p^2k^2+1)}                                  \]
and
\[ {\mathcal L}(g) (p)= \sum_{k=1}^{\infty}     \frac{1}{p(p^2k^2+1)}.    \]
The equality $\displaystyle  {\mathcal L}(g) (p)= {\mathcal L}(g_1) (p)    $  is obtained   by using the well known  partial fractions decomposition
\[
\frac{z}{e^z-1} =1 - \frac{z}{2} + \sum_{k \geq 1} \frac{2z^2}{z^2+ 4 k^2 \pi^2 },\quad z \in \C \setminus 2i\pi \Z,
\]
where we have to set $\displaystyle z= \frac{2\pi}{ p}$. Hence $g=g_1$  by injectivity of Laplace Transform.
\subsection{Some Mellin transforms and the cube of theta functions}
It has been noticed in \cite{Flett} (p.14) that the function
\[R(t)=  \sum_{n\leq t} \frac{1}{n} e^{\frac{it}{n}}  \]
is very similar to $\zeta(1+it)$ in its asymptotic behaviour as $t\to +\infty$. This could suggest a link between this function and the theta series
 $\displaystyle \vartheta_3(q)= \sum_{n\in \BZ}q^{n^2}$. 
In this section, following a suggestion of Crandall \cite{Crandall}, we would like to briefly show by considering  Mellin transforms  an unexpected link to the third power of the (fourth) Jacobi theta function  $\displaystyle \vartheta_4(q)= 
\sum_{n\in \Z} (-1)^n q^{n^2},\; \vert q\vert<1$. We define 
\begin{eqnarray}
 {\tilde \chi}(s, t)&=& \sum_{n=1}^{\infty}  \frac{e^{-\frac{t}{n}}}{n^s} \label{Flett1},\\
  \chi(s, t)&=& \sum_{n=1}^{\infty} (-1)^n \frac{e^{-\frac{t}{n}}}{n^s}. \label{Flett2}
\end{eqnarray}
These two functions are defined for $s\in \C$ and   $\Re s>1 $ for $ {\tilde \chi}(s, t)$, $\Re s >0 $ for $ \chi(s, t)$. They are related by
\[   \chi(s, t)= \frac{1}{2^{s-1}}  {\tilde \chi}(\frac{s}{2}, t)-  {\tilde \chi}(s,  t).\]
We have
\begin{equation*}
\int_0^{\infty} t^{s-1}  {\tilde \chi}^2(s, t)\, dt= \Gamma(s) \sum_{n,m= 1}^{\infty} \frac{1}{(n+m)^s}= \Gamma(s) \left(\zeta(s-1)-\zeta(s) \right)
\end{equation*}
\begin{equation*}
\int_0^{\infty} t^{s-1}  \chi^2(s, t)\, dt= \Gamma(s) \sum_{n,m= 1}^{\infty} \frac{(-1)^{n+m}}{(n+m)^s}= \Gamma(s) \left(\eta(s-1)-\eta(s) \right),
\end{equation*}
where
\[ \eta(s)= \sum_{n=0}^{\infty} \frac{(-1)^n}{(2n+1)^s}\]
is the Dirichlet $\eta$-function. Furthermore
\begin{equation*}
 \frac{1}{\Gamma(s) }\int_0^{\infty} t^{s-1}  {\tilde \chi}^3(s, t)\, dt= \sum_{p,q,r= 1}^{\infty} \frac{1}{(pq+qr+rp)^s}
\end{equation*}
and for $ \chi(s, t)$ we have a more interesting result 
\begin{equation}\label{Crandall11}
\frac{1}{\Gamma(s) }\int_0^{\infty} t^{s-1} \chi^3(s, t)\, dt= \sum_{p,q,r= 1}^{\infty} \frac{(-1)^{p+q+r}}{(pq+qr+rp)^s}.
\end{equation}
We have the following lemma due to Andrews \cite{Andrew} (p.124)
\begin{lemma} For  $ \vert q\vert<1$
\begin{equation}\label{Crandall22}
\vartheta_4^3(q)= \sum_{n\in \Z}(-1)^n r_3(n)q^{n}= 1+4\sum_{n=1}^{\infty} \frac{(-1)^n q^n}{1+q^n}
-2 \sum_{\substack{n=1 \\ \vert j\vert \leq n}}^{\infty}
\frac{q^{n^2-j^2}(1-q^n)(-1)^j}        {1+q^n}\end{equation}
\end{lemma}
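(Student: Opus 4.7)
The plan is to establish this identity as a formal $q$-series equality by expanding $\vartheta_4(q)^3$ as a triple theta series and then reorganizing the sum according to the structure of the triples $(a,b,c)$ that contribute each exponent. First I would cube the Jacobi series representation $\vartheta_4(q)=\sum_{a\in\Z}(-1)^a q^{a^2}$ to obtain
\[\vartheta_4(q)^3=\sum_{(a,b,c)\in\Z^3}(-1)^{a+b+c}q^{a^2+b^2+c^2}.\]
The triple $(0,0,0)$ yields the leading constant $1$ on the right-hand side, so the task reduces to matching the sum over nonzero triples against the Lambert series $4\sum_{n\ge1}(-1)^n q^n/(1+q^n)$ together with the double sum $-2\sum_{n\ge1,|j|\le n}(-1)^j q^{n^2-j^2}(1-q^n)/(1+q^n)$.

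Next I would split the nonzero triples according to how many coordinates vanish. Triples with exactly two zero entries contribute $6\sum_{n\ge1}(-1)^n q^{n^2}$, and a standard lacunary-to-Lambert rearrangement (using $\sum_{a\ge0}(-q^n)^a=1/(1+q^n)$ to telescope geometric tails) converts this lacunary series into a form that accounts for the bulk of the first Lambert series in the statement. For triples with at most one zero coordinate, I would introduce integer parameters $n\ge1$ and $|j|\le n$ designed so that the remaining exponent factors as $n^2-j^2=(n-j)(n+j)$; summing over the remaining sign freedom collapses the inner geometric series to the rational factor $(1-q^n)/(1+q^n)$, producing the double sum on the right. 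The prefactor $(-1)^j$ is what survives of $(-1)^{a+b+c}$ once the signs of the other two coordinates are summed.

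The hard part will be this third step: producing the explicit bijective parametrization $(a,b,c)\leftrightarrow(n,j,\epsilon_1,\epsilon_2)$ that simultaneously (i) sweeps all relevant triples bijectively, (ii) realizes the exponent $n^2-j^2$, (iii) generates the numerator factor $1-q^n$ (which functions essentially as an inclusion-exclusion correction distinguishing genuine triples from those already counted in the two-zero contribution), and (iv) carries $(-1)^{a+b+c}$ into $(-1)^j$. Once this combinatorial arrangement is pinned down and the geometric sums carried out, the identity follows by the uniqueness of the formal power series expansion of $\vartheta_4(q)^3$. Andrews' argument in \cite{Andrew} presumably performs precisely this bookkeeping, most likely with the aid of the Jacobi triple product identity applied in a disguised form or a Bailey-pair transformation that packages the telescoping automatically.
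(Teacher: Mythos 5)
Your proposal is a plan rather than a proof: the step you yourself flag as ``the hard part'' --- the parametrization of triples $(a,b,c)$ by $(n,j)$ together with whatever mechanism produces the factor $(1-q^n)/(1+q^n)$ and the sign $(-1)^j$ --- is precisely the content of the lemma, and you defer it to Andrews. Worse, the partial matchings you do commit to are false. The triples with exactly two zero coordinates contribute $6\sum_{n\ge1}(-1)^nq^{n^2}=-6q+6q^4-6q^9+\cdots$, while the Lambert series is $4\sum_{n\ge1}(-1)^nq^n/(1+q^n)=-4q+8q^2-8q^3+4q^4-\cdots$; these already disagree at $q^1$ ($-6$ versus $-4$) and at $q^2$ ($0$ versus $+8$), so no ``lacunary-to-Lambert rearrangement'' sends one to the other, and the right-hand side simply does not decompose according to the number of vanishing coordinates. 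Likewise $\frac{1-q^n}{1+q^n}=1+2\sum_{k\ge1}(-q^n)^k$ is not the geometric series obtained by summing out a third coordinate, and $n^2-j^2$ is an \emph{indefinite} form (it vanishes on the whole lines $j=\pm n$) while $a^2+b^2+c^2$ is positive definite, so a weight-preserving bijection of the kind you describe cannot exist: the identity is a Hecke-type indefinite theta relation, and Andrews obtains it from Bailey-pair/$q$-hypergeometric machinery rather than from a rearrangement of the lattice sum. The paper itself offers no proof to compare against --- it only cites Andrews --- so the entire burden of the argument rests on the step you left open.

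A worked-out attempt would also have surfaced a defect in the statement itself: as printed, with the inner sum over $\vert j\vert\le n$, the double series diverges, because the terms $j=\pm n$ contribute $2(-1)^n(1-q^n)/(1+q^n)$, which does not tend to $0$. The identity is correct with the strict range $\vert j\vert<n$ (equivalently $j=-n+1,\dots,n-1$); with that reading one computes $1+4\sum_{n\ge1}(-1)^nq^n/(1+q^n)-2\sum_{n\ge1}\sum_{\vert j\vert<n}(-1)^jq^{n^2-j^2}(1-q^n)/(1+q^n)=1-6q+12q^2-8q^3+6q^4-24q^5+\cdots$, which matches $\sum_N(-1)^Nr_3(N)q^N$. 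Any proof, bijective or otherwise, has to begin by fixing the statement, and checking a few coefficients --- the cheapest possible test --- is exactly what reveals both the misprint and the failure of your proposed decomposition.
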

\noindent where $r_3(n)$ is the number of representations of $n$ as sum of three squares. According to a result of Fermat an integer is a sum of three squares
 if and only if  it is not of form $4^n(8m+7)  $. There are some gaps in the expansion in power series of the left hand side of \eqref{Crandall22}.
Similarly to \eqref{Crandall11} we have
\begin{equation}\label{Crandall 3}
    \frac{1}{\Gamma(s) }\int_0^{\infty}t^{s-1}\left(\vartheta_4^3(q)-1 \right)\,dt= \sum_{p,q,r\in \BZ}^{\infty}{\vphantom{\sum}}' \frac{(-1)^{p+q+r}}{(p^2+q^2+r^2)^s}.
\end{equation}
A link between \eqref{Crandall11} and \eqref{Crandall 3} is given by Crandall \cite{Crandall} (p.372)  as a relation between two Epstein zeta functions associated with the two not equivalent ternary forms
\[q_1(u,v,w)= u^2+v^2+w^2,\quad  q_2(u,v,w)= uv+vw+wu \]
in the form
\begin{equation}\label{Crandall4}
\sum_{p,q,r\in \BZ}^{\infty}{\vphantom{\sum}}' \frac{(-1)^{p+q+r}}{(p^2+q^2+r^2)^s}= -6(1-2^{1-s})^2 \zeta^2(s) -4 \sum_{p,q,r= 1}^{\infty} \frac{(-1)^{p+q+r}}{(pq+qr+rp)^s}.
\end{equation}
Next we establish a functional equation
\begin{thm}  For $t>0$ the function  $\displaystyle \chi(\frac{1}{2}, t)$ satisfies the following functional equation
\begin{equation}\label{Crandall1}
\chi(\frac{1}{2}, t)= \sum_{n=1}^{\infty} \frac{(-1)^n}{\sqrt{n}}e^{-t/n}=  \sqrt{i}\, \sum_{\mathcal O}  \frac{e^{-\gamma \sqrt{2\pi dt}}}{\sqrt d},
\end{equation}
with $\gamma= 1-i $  and  $\mathcal{ O}$ is the set of odd integers.
\end{thm}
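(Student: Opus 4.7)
The plan is to derive the identity by Mellin inversion combined with the Riemann functional equation, evaluating the resulting contour integral via the Cahen--Mellin formula $\frac{1}{2\pi i}\int_{(c)}\Gamma(u)\,x^{-u}\,du=e^{-x}$ (valid for $\Re x>0$).

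First, since $\int_0^{\infty}t^{w-1}e^{-t/n}\,dt=\Gamma(w)\,n^{w}$, termwise integration gives for $0<\Re w<1/2$
\[
\int_0^{\infty}t^{w-1}\chi(\tfrac12,t)\,dt=\Gamma(w)\sum_{n\ge 1}\frac{(-1)^{n}}{n^{1/2-w}}=-\Gamma(w)\,\eta\bigl(\tfrac12-w\bigr),
\]
and by Mellin inversion
\[
\chi(\tfrac12,t)=-\frac{1}{2\pi i}\int_{(c)}\Gamma(w)\,\eta(\tfrac12-w)\,t^{-w}\,dw,\qquad 0<c<\tfrac12.
\]
Next I would rewrite the integrand using $\eta(s)=(1-2^{1-s})\zeta(s)$ together with the Riemann functional equation
\[
\zeta(\tfrac12-w)=2^{1/2-w}\pi^{-1/2-w}\sin\bigl(\tfrac{\pi}{4}-\tfrac{\pi w}{2}\bigr)\Gamma(\tfrac12+w)\,\zeta(\tfrac12+w),
\]
collapse $\Gamma(w)\Gamma(w+\tfrac12)=2^{1-2w}\sqrt{\pi}\,\Gamma(2w)$ by Legendre duplication, and factor $1-2^{1/2+w}=-2^{1/2+w}(1-2^{-1/2-w})$ so that $(1-2^{-1/2-w})\zeta(\tfrac12+w)=\sum_{d\in\mathcal O}d^{-1/2-w}$ for $\Re w>1/2$. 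The integrand then takes the compact form $4\cdot(4\pi dt)^{-w}\sin\bigl(\tfrac{\pi}{4}-\tfrac{\pi w}{2}\bigr)\Gamma(2w)$ summed over odd $d$.

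I would then shift the contour from $(c)$ to $(c')$ with $c'>1/2$. The only candidate pole crossed, at $w=1/2$ from the simple pole of $\zeta(\tfrac12+w)$, is cancelled by the simple zero of $\sin(\tfrac{\pi}{4}-\tfrac{\pi w}{2})$ there, so no residue contributes. It remains to evaluate, for $a=4\pi dt$,
\[
K(a)=\frac{1}{2\pi i}\int_{(c')}\sin\bigl(\tfrac{\pi}{4}-\tfrac{\pi w}{2}\bigr)\Gamma(2w)\,a^{-w}\,dw.
\]
Substituting $u=2w$ and expanding $\sin(\tfrac{\pi}{4}-\tfrac{\pi u}{4})=\tfrac{1}{2i}\bigl(e^{i\pi/4}e^{-i\pi u/4}-e^{-i\pi/4}e^{i\pi u/4}\bigr)$ splits $K(a)$ into two Cahen--Mellin integrals whose arguments $x=a^{1/2}e^{\pm i\pi/4}=(1\pm i)\sqrt{2\pi dt}$ have positive real part; each evaluates to $e^{-x}$. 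Collecting the prefactors yields
\[
\chi(\tfrac12,t)=\sqrt{i}\sum_{d\in\mathcal O}\frac{e^{-(1-i)\sqrt{2\pi dt}}}{\sqrt{d}}+\overline{\sqrt{i}}\sum_{d\in\mathcal O}\frac{e^{-(1+i)\sqrt{2\pi dt}}}{\sqrt{d}},
\]
which is the announced identity, the second sum being the complex conjugate of the first, so that both sides are real.

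The main technical obstacle is justifying the contour shift and the interchange of $\sum_{d\in\mathcal O}$ with the contour integral on $(c')$. This requires Stirling asymptotics, which yield $|\sin(\tfrac{\pi}{4}-\tfrac{\pi w}{2})\Gamma(2w)|\lesssim|\Im w|^{A}e^{-\pi|\Im w|/2}$ on $\Re w=c'$, combined with the at-most-polynomial growth of $\zeta(\tfrac12+w)$ on the same line; together these ensure absolute convergence of the shifted integral and uniform control in $d$. The vanishing of the apparent residue at $w=\tfrac12$ must also be checked explicitly via the Laurent expansions $\sin(\tfrac{\pi}{4}-\tfrac{\pi w}{2})=-\tfrac{\pi}{2}(w-\tfrac12)+O((w-\tfrac12)^{3})$ and $\zeta(\tfrac12+w)=(w-\tfrac12)^{-1}+O(1)$.
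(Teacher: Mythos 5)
Your proof is correct, and it takes a genuinely different route from the paper's. The paper applies the Poisson summation formula \eqref{Poisson} to the even extension of $f(x)=e^{i\pi x}e^{-t/x}/\sqrt{x}$ and evaluates each Fourier cosine integral via $\int_0^{\infty}w^{\nu-1}e^{-bw-a/w}\,dw=2(a/b)^{\nu/2}K_{\nu}(2\sqrt{ab})$ specialized to $K_{1/2}(z)=\sqrt{\pi/(2z)}\,e^{-z}$; the odd integers $d=\pm 2n+1$ and the branch convention $\sqrt{-\vert d\vert}=i\sqrt{\vert d\vert}$ then appear directly from the frequencies $\pi\pm 2\pi n$. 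You instead take the Mellin transform, invoke the functional equation of $\zeta$, and finish with the Cahen--Mellin integral; here the odd integers emerge from the Euler-factor identity $(1-2^{-s})\zeta(s)=\sum_{d\,\mathrm{odd},\,d>0}d^{-s}$, which makes the arithmetical source of $\mathcal{O}$ more transparent and exhibits the identity as the functional-equation symmetry of the $L$-series $(1-2^{1-s})\zeta(s)$. Each approach carries its own analytic debt: the paper's $f$ decays only like $x^{-1/2}$ at infinity, so its use of Poisson summation needs a limiting (Abel-summation) argument, while you must justify the termwise Mellin transform of a conditionally convergent series and the contour shift --- which you correctly reduce to the Stirling bound on $\sin(\frac{\pi}{4}-\frac{\pi w}{2})\Gamma(2w)$ and the cancellation of the pole of $\zeta(\frac{1}{2}+w)$ at $w=\frac{1}{2}$ by the zero of the sine factor. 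Your final expression as two conjugate sums over positive odd $d$ agrees with the paper's single sum over all odd integers once the stated branch of $\sqrt{d}$ for $d<0$ is unwound. Two cosmetic points: your $\eta$ is the alternating zeta function $(1-2^{1-s})\zeta(s)$, whereas this paper reserves $\eta$ for $\sum_{n\geq 0}(-1)^{n}(2n+1)^{-s}$, so the notation should be disambiguated; and your displayed ``compact form'' of the integrand omits the factor $d^{-1/2}$, though it correctly reappears in the final formula.
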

We could also seek for a result similar to \eqref{Crandall1} for 
\begin{equation}\label{Crandall2}
 {\tilde \chi}(s, t)= \sum_{n=1}^{\infty} \frac{1}{n^s}e^{-t/n},\quad \Re s>1.
\end{equation}
 The relevance of  this function lies in  its relation to a Hardy-Littlewood-Flett like function:
\[ \Im{\tilde \chi}(1, -it)=  \Im \sum_{n=1}^{\infty} \frac{1}{n}e^{it/n}= \sum_{n=1}^{\infty} \frac{1}{n} \sin(\frac{t}{n}).
 \]
In order to prove \eqref{Crandall1} we consider, for a fixed $t>0$,  the function
\[  f(x)= e^{i\pi x} \frac{ e^{-t/x}}{\sqrt x}, \;  x>0 \]
extended to the origin by $f(0)= 0 $ and to $\BR$ as an even function. The obtained function is  $\mathcal C^{\infty}$ on the real line to which we 
apply the Poisson summation formula \eqref{Poisson} to obtain 
\begin{equation}\label{Poisson2} 
\sum_{n=1}^{\infty} \frac{(-1)^n}{\sqrt{n}}e^{-t/n}=  \int_0^{\infty} e^{i\pi x} \frac{ e^{-t/x}}{\sqrt x} dx +2 \sum_{n=1}^{\infty}  \int_0^{\infty} e^{i\pi x} \frac{ e^{-t/x}}{\sqrt x}\cos (2\pi nx) dx.
\end{equation}

\begin{rem}
The function $\displaystyle y \mapsto   \frac{ e^{\frac{-t}{y}}}{{\sqrt y} }$ is  continuous on  $[0, \infty[ $ and decreases to $0$ at infinity, hence the proper
integrals
 $\displaystyle \int_0^{\infty}  f(y)  \cos (2\pi ny) dy, n\geq 0$ are convergent.
\end{rem}
We compute ${\mathcal F}(f)(n)$ as follows
\begin{equation*}
 {\mathcal F}(f)(n)=     \int_0^{\infty}    e^{i\pi y}  \frac{ e^{\frac{-t}{y}}}{{\sqrt y} } \cos(2n\pi y)\, dy 
 \end{equation*}
 \begin{equation}\label{Crandall3}
 = \frac{1}{2}
 \left\{ \int_0^{\infty }    e^{(i\pi+ 2i\pi n) y-t/y}      y^{-1/2}     \,dy  +     \int_0^{\infty }    e^{(i\pi- 2i\pi n) y-t/y}      y^{-1/2}     \,dy              \right\}.
 \end{equation}
We recall the modified Bessel function \eqref{Bessel1}, written in the form
\[ \int_0^{\infty} w^{\nu-1} e^{- w-a/w} \,dw= 2 \left(\frac{1}{a}\right)^{\nu/2} K_{\nu}\left(2\sqrt{a} \right)\]
that we use in the form
\begin{equation}\label{Crandall4}
\int_0^{\infty} w^{\nu-1} e^{-bw-a/w} \,dw= 2 \left(\frac{a}{b}\right)^{\nu/2} K_{\nu}\left(2\sqrt{ab} \right).
\end{equation}
Actually for  \eqref{Crandall3} we need only the simplest case of
\begin{equation}\label{Crandall40}
K_{\frac{1}{2}} (z)= \sqrt{\frac{\pi}{2z}}\, e^{-z}.
\end{equation}
The first integral in the left hand side of \eqref{Crandall3}, with
 \[a= t,\quad  -b= i\pi +2i\pi n= i\pi(2n+1)       \]
is equal to
\begin{equation}\label{Crandall5}
2\left( \frac{t}{-i\pi(2n+1)} \right)^{1/4} K_{1/2}\left(2\sqrt{t\left( -i\pi(2n+1)\right )} \right).
\end{equation}
The second integral with 
 \[a= t,\quad  -b= i\pi -2i\pi n= i\pi(-2n+1)       \]
is equal to
\begin{equation}\label{Crandall6}
2\left( \frac{t}{-i\pi(-2n+1)} \right)^{1/4} K_{1/2}\left(2\sqrt{t\left( -i\pi(-2n+1)\right )} \right).
\end{equation}
By using \eqref{Crandall40} we see that 
  \eqref{Crandall3} is the sum of
   \[  \left( \frac{t}{-i\pi(2n+1)} \right)^{1/4}\sqrt{\frac {\pi}{4\sqrt{t\left( -i\pi(2n+1)\right )} }}  e^{-2\sqrt{t\left( -i\pi(2n+1)\right )} } \]  
   and
   \[      \left( \frac{t}{-i\pi(-2n+1)} \right)^{1/4}\sqrt{\frac {\pi}{4\sqrt{t\left( -i\pi(-2n+1)\right )} }}  e^{-2\sqrt{t\left( -i\pi(-2n+1)\right )} }.               \]
  As in \cite{Crandall} we denote by $\gamma= 1-i, \,d=  \pm 2n+1, \, n\in \BN^{\star}$ with $\sqrt{-\vert d\vert}  =i   \sqrt{\vert d\vert} $.  Then $d$  describes $\mathcal O\setminus\{1\}$ and
\[ 
   \left( \frac{t}{-i\pi d} \right)^{1/4}\sqrt{\frac {\pi}{4\sqrt{t\left( -i\pi d\right )} }}  e^{-2\sqrt{-it\pi d} }
   = \frac{1}{2} \sqrt{i}\;  \frac{e^{-\gamma \sqrt{t\pi d} }}{\sqrt d}.
\]
Hence
\begin{equation}\label{Crandall7}
2 \sum_{n=1}^{\infty}  \int_0^{\infty} f(y) \cos (2\pi ny) dy=        \sqrt{i}\, \sum_{d\in {\mathcal O},\, d\neq 1}      \frac{e^{-\gamma \sqrt{t\pi d} }}{\sqrt d }.     
\end{equation}
For the remaining term in \eqref{Poisson2} we use  \eqref{Crandall5}, with $n= 0$,  to obtain
\[ \int_0^{\infty} f(x)dx= \, {\mathcal F}(f)(0)= \sqrt{i}\,  e^{-\gamma\sqrt{2\pi t}} \]
which together with  \eqref{Crandall7} gives finally  \eqref{Crandall1}:
\begin{equation*}
\chi(\frac{1}{2}, t)= \sqrt{i}\, \sum_{\mathcal O}  \frac{e^{-\gamma \sqrt{2\pi dt}}}{\sqrt d}.
\end{equation*}
In close analogy to Jacobi's transformation of Theta  functions \eqref{Crandall1} appears as a convergence acceleration of a slowly convergent series. \\Incidentally $\displaystyle   \chi (\frac{1}{2}; \frac{t^2}{4}) $ is a Fourier transform of a function of the Schwartz class. Indeed let $\displaystyle g(x)= \frac{1}{1+e^{x^2}}$, the reciprocity formulas are
 \[{\hat g}(t)= \frac{1}{2\pi}  \int_{\BR}  \frac{1}{1+e^{x^2}}e^{i t x} \,dx, \quad  g(x)=  \int_{\BR} {\hat g}(t)e^{-itx}\,dt\]
 with
\[{\hat g}(t)= -\frac{1}{2\pi} \sum_{n >0} (-1)^n \int_{\BR} e^{i t x} e^{-n x^2} dx\]
\[ = 
 -\frac{1}{2\sqrt{\pi}}  \sum_{n>0} \frac{(-1)^n}{\sqrt{n} }e^{- \frac{t^2}{4n} }=  -\frac{1}{2\sqrt{\pi}} \chi (\frac{1}{2}; \frac{t^2}{4} ).    \]

\begin{rem}  The convolution of three functions $f, g, h\in {\mathcal S}(\BR)$ is, as well known,
\begin{eqnarray*}
\left(f\star \left(g\star h\right)\right)(x)&=& \int_{\BR} f(y) (g\star h)(x-y) dy\\
&=& \int_{\BR}f(y) \left( \int_{\BR} g(z)h(x-y-z)\right) dy\\
&=& \int \int_{\BR\times \BR} f(y)g(z)h(x-y-z)dz dy.
\end{eqnarray*}
With $\displaystyle f(x)= g(x)= h(x)= \frac{1}{1+e^{x^2}}$   we have 
\[\left(f\star \left(g\star h\right)\right)(x)= \int \int_{\BR^2} 
   \frac{ dt\,du}{   (1+e^{y^2})             (1+e^{z^2})(1+e^{(x-y-z)^2})} \]
\end{rem} 
so that for the Fourier transform
\[\widehat{f\star g\star h}(t)= (2\pi)^2 {\hat{g}}^3(t)= (2\pi)^2  \frac{1}{8\pi \sqrt{\pi}} \chi^3 (\frac{1}{2}; \frac{t^2}{4})= \frac{\sqrt{\pi}}{2} \chi^3 (\frac{1}{2}; \frac{t^2}{4}) \]
or
\[
 f\star g\star h(x)=  \frac{\sqrt{\pi}}{2}\int_{\BR}  \chi^3 (\frac{1}{2}; \frac{t^2}{4} ) e^{-itx}\,dt.
\]
Evaluating at $x= 0$ we obtain 
\[\int \int_{\BR^2} 
   \frac{ dy\,dz}{   (1+e^{y^2})             (1+e^{z^2})(1+e^{(-y-z)^2})}=  \int \int_{\BR^2} 
   \frac{ dy\,dz}{   (1+e^{y^2})             (1+e^{z^2})(1+e^{(y-z)^2})}\]
    \[=   \frac{\sqrt{\pi}}{2}  \int_{\BR}  \chi^3 (\frac{1}{2}; \frac{t^2}{4} )\,dt =  \sqrt{\pi}  \int_0^{\infty}\frac{1}{\sqrt{u}}  \chi^3 (\frac{1}{2}; u)\,du. \]
    From \eqref{Crandall11}, with $s=\frac{1}{2}$, we have (Compare with \cite{Crandall})
    \begin{equation}\label{Crandall8}
    \sum_{p,q,r= 1}^{\infty} \frac{(-1)^{p+q+r}}{(pq+qr+rp)^{\frac{1}{2}}}=  -\frac{1}{\pi} \int \int_{\BR^2} 
   \frac{ dy\,dz}{   (1+e^{y^2})             (1+e^{z^2})(1+e^{(y-z)^2})}.
   \end{equation}
We end this study by using an interesting integral representation due to Mellin \cite{Mellin} (p. 22-23):
\begin{align}
\frac{\Gamma(s)}{(w_0+w_1+\cdots+w_q)^s}=& \frac{1}{(2i\pi)^q}\int_{\kappa_1-i\infty}^{\kappa_1+i\infty} \cdots \int_{\kappa_q-i\infty}^{\kappa_q+i\infty} 
\frac{\Gamma(s-z_1\cdots z_q)}{w_0^{s-z_1\cdots z_q}}\\
 &\frac{\Gamma(z_1)\cdots \Gamma(z_q)} {w_1^{z_1}\cdots w_q^{z_q} } dz_1\cdots dz_q, \nonumber\\
 \kappa_{\nu}>0,\,\nu=1, \cdots,q ;&\quad  \Re s> \kappa_1+\cdots+\kappa_q>0. \nonumber
\end{align}
In the case of $q=2$ we obtain at once, as in \eqref{Crandall8}
\begin{align*}
 \sum_{p,q,r\geq 1} \frac{(-1)^{p+q+r}}{(pq+qr+rp)^s}=&  - \frac{1}{4\pi^2\, \Gamma(s)} \int_{\kappa_1-i\infty}^{\kappa_1+i\infty} \int_{\kappa_2-i\infty}^{\kappa_2+i\infty} 
\Gamma(s-u_1u_2) \Gamma(u_1)\Gamma(u_2)\\
 &\sum_{p\geq 1}\frac{(-1)^p}{p^{s+u_1-u_1u_2}} \sum_{q\geq 1}\frac{(-1)^q}{p^{s+u_2-u_1u_2}}
\sum_{r\geq 1}\frac{(-1)^r}{r^{u_1+u_2}}\,du_1 du_2\\
= - \frac{1}{4\pi^2} \int_{\kappa_1-i\infty}^{\kappa_1+i\infty} \int_{\kappa_2-i\infty}^{\kappa_2+i\infty} &
\Gamma(s-u_1u_2) \Gamma(u_1)\Gamma(u_2) K(s;u_1,u_2)\,du_1 du_2,
\end{align*}  
where
\[K(s;u_1,u_2)=  \eta(s+u_1-u_1u_2) \eta(s+u_2-u_1u_2) \eta(u_1+u_2).\]
This representation of the Epstein zeta function of the ternary form $q_2(u,v,w)= uv+vw+wu $ in terms of the Dirichlet $\eta$-function and similar other representations can shed some light on their analytic continuation.\\

\end{document}